\newcommand{\R}{\mathbb{R}}
\newcommand{\N}{\mathbb{N}}
\newcommand{\RN}{{\mathbb{R}^N}}
\newcommand{\CH}{\mathcal{H}^{p,q}}
\newcommand{\CHZ}{\mathcal{H}_{0,{\rm rad}}^{p,q}}
\newcommand{\CS}{\mathcal{S}_{\rm rad}}
\newcommand{\CSZ}{\mathcal{S}_{0,{\rm rad}}}
\newcommand{\G}{\Gamma}
\renewcommand{\l}{\lambda}
\newcommand{\Div}{\,\mathrm{div}}
\newcommand{\ef}{\eqref}
\renewcommand{\theequation}{\thesection.\@arabic\c@equation}
\providecommand\@dotsep{5}
\def\listtodoname{List of Todos}
\def\listoftodos{\@starttoc{tdo}\listtodoname}
\newtheorem{theorem}{Theorem}[section]
\newtheorem{lemma}[theorem]{Lemma}
\newtheorem{remark}[theorem]{Remark}
\newtheorem{proposition}[theorem]{Proposition}
\newtheorem{corollary}[theorem]{Corollary}
\newcommand{\ad}[1]{{\color{black}#1}}
\title
[
Some quasilinear elliptic equations involving multiple $p$-Laplacians
]
{
Some quasilinear elliptic equations involving multiple $p$-Laplacians
}
\author[A. Pomponio]{Alessio Pomponio}
\author[T. Watanabe]{Tatsuya Watanabe}
\address[A. Pomponio]{\newline\indent
Dipartimento di Meccanica, Matematica e Management
\newline\indent 
Politecnico di Bari
\newline\indent
Via Orabona 4,  70125  Bari, Italy}
\email{alessio.pomponio@poliba.it}
\address[T. Watanabe]{\newline\indent 
Department of Mathematics, 
\newline\indent 
Faculty of Science, Kyoto Sangyo University,
\newline\indent
Motoyama, Kamigamo, Kita-ku, Kyoto-City, 603-8555, Japan}
\email{tatsuw@cc.kyoto-su.ac.jp}
\thanks{}
\subjclass[2010]{35J92, 35J20, 35Q60}
\date{}
\keywords{Quasilinear elliptic equation, variational method, ground state solution}
\begin{document}

\begin{abstract}
This paper is devoted to the study, with variational technique, 
of the following quasilinear elliptic problem:
\begin{equation*}
\begin{cases} 
-\Delta_p u - \beta \Delta_q u =g(u) & \hbox{in} \ \RN,
\\
u(x)\to 0 & \hbox{as }|x|\to +\infty,
\end{cases}
\end{equation*}
where $N \ge 3$, $1<p<q$ and $p<N$.
We are interested in the existence of positive solutions for general nonlinearities.
Especially we obtain the existence result for the zero mass case,
which includes a large class of pure power nonlinearities.
More general quasilinear problems of Born-Infeld type are also considered.
\end{abstract}

\maketitle

\section{Introduction}

In this paper, we study, with variational technique, the following quasilinear elliptic problem:
\begin{equation}\label{eq:1.1}\tag{$\mathcal{P}$}
\begin{cases} 
-\Delta_p u - \beta \Delta_q u =g(u) & \hbox{in} \ \RN,
\\
u(x)\to 0 & \hbox{as }|x|\to +\infty,
\end{cases}
\end{equation}
where $N \ge 3$, $\beta >0$, $1<p<q$, $p<N$
and $\Delta_p u=\Div (|\nabla u|^{p-2} \nabla u)$ is the $p$-Laplacian.
In the last decades, a lot of works has been done for the study of $(p,q)$-Laplace equation.
However, most of them are devoted to the bounded domain case 
or problems with critical nonlinearities 
(see, for example, \cite{BCS,CEM,F,MP} and references therein).

If $\beta=0$, \eqref{eq:1.1} reduces to the following scalar field equation:
\begin{equation} \label{eq:1.2}
-\Delta_p u =g(u) \ \hbox{in} \ \RN.
\end{equation}
The existence of solutions of \eqref{eq:1.2} has been studied, among others, in \cite{BL,DM,FG}.
Moreover when $p=2$, the almost optimal condition for the existence of
nontrivial solutions has been obtained in \cite{BL}.
However, a scaling property which plays an essential role in \cite{BL,DM} is lost 
if $\beta \ne 0$ in \ef{eq:1.1},
causing that the approach in \cite{BL,DM} cannot be applied to \ef{eq:1.1}.
Thus it is an challenging problem to look for an optimal condition for the existence
of nontrivial solutions of \ef{eq:1.1}.

The aim of this paper is, therefore, 
to consider \ef{eq:1.1} in the whole $\RN$ and for a general nonlinearity $g$.
Especially, we do not assume any monotonicity conditions on $g$.

Our another motivation comes from the study of the
{\it Born-Infeld} equation which appears in electromagnetism:
\begin{equation} \label{eq:1.3}
-\Div \left( \frac{\nabla u}{\sqrt{1-\frac{1}{b^2}|\nabla u|^2}} \right) =g(u) 
\ \mbox{in} \ \RN,
\end{equation}
where $b$ is a positive constant and called {\it the absolute field}  constant. 
(We refer to \cite{BDP} and references therein 
for more physical backgrounds of the Born-Infeld equation.)
Indeed by the Taylor expansion, it follows that
$$
\frac{1}{\sqrt{1-x}}=1+\frac{x}{2}+\frac{3}{2\cdot 2^2}x^2+\frac{5!!}{3!\cdot 2^3}x^3
+\cdots +\frac{(2k-3)!!}{(k-1)!2^{k-1}}x^{k-1}+\cdots \quad 
\hbox{for} \ |x|<1.$$
Putting $x=\frac{|\nabla u|^2}{b^2}$ and $\beta=\frac{1}{2b^2}$ formally,
we can see that the $0$-th order approximated problem of \ef{eq:1.3} is exactly 
the scalar field equation \ef{eq:1.2} with $p=2$.
When we adopt the $1$-st order approximation, 
one has the following quasilinear elliptic equation:
\begin{equation*} 
-\Delta u- \beta \Delta_4 u = g(u) \quad \hbox{in} \ \RN,
\end{equation*}
which can be obtained by taking $p=2$ and $q=4$ in \ef{eq:1.1}.
Furthermore the $k$-th order approximated problem is given by
\begin{equation} \label{eq:1.4}
-\Delta u-\beta \Delta_4u -\frac{3}{2} \beta^2\Delta_6 u 
-\cdots -\frac{(2k-3)!!}{(k-1)!}\beta^{k-1}\Delta_{2k} u =g(u) \quad \hbox{in} \ \RN,
\end{equation}
where $k \in \N$, $(2k-3)!!=(2k-3)(2k-5)\cdots 5 \cdot 3\cdot 1$, 
$(-1)!!=1$. Thus it is natural to ask if solutions of \ef{eq:1.3}
can be obtained as a limit of solutions for \ef{eq:1.4}.
This question has been considered in \cite{BDP} for the inhomogeneous Born-Infeld problem:
\begin{equation} \label{eq:1.5}
-\Div \left( \frac{\nabla u}{\sqrt{1-\frac{1}{b^2}|\nabla u|^2}} \right) = \rho(x) 
\quad \mbox{in} \ \R^N.
\end{equation}
It is shown that under suitable assumptions on $\rho$, 
the unique minimizer of the action functional associated to \ef{eq:1.5} 
can be obtained as a weak limit of
the unique solution of the $k$-th order approximated problem for \ef{eq:1.5}.
(See \cite[Theorem 5.2]{BDP} and \cite{CK} for related results.)
On the other hand, problem \ef{eq:1.3} is much less studied. 
In \cite{BDD}, the case $g(u)=|u|^{\alpha-2}u$ with
$\alpha>\frac{2N}{N-2}$ has been considered.
Then it was shown that \ef{eq:1.3} has a positive radial solution
and a sequence of radial solutions. 
Moreover, again by restricting the research to solutions with radial symmetry, 
in \cite{A2} the equation \eqref{eq:1.3} is reduced to an ODE for which the existence, 
non-existence and multiplicity of ground states 
(namely positive solutions going to zero at infinity) 
and bound states (i.e. solutions going to zero at infinity) 
are investigated for the Lane-Emden type equation. 
By the use of the shooting method, 
in \cite{A1} the existence of a ground state solution is also determined 
for the equation presenting a sign-changing nonlinearity.

Our purpose of this paper is to investigate the existence of positive solutions 
of \ef{eq:1.1} and \ef{eq:1.4} for a wide class of nonlinearities including the case $g(u)=u^{\alpha}$.
We expect our existence results will be the next step for the further study 
of the Born-Infeld equation \ef{eq:1.3}.
Hereafter in this paper, we take $\beta=1$ for simplicity, 
since $\beta$ plays no essential role in the study of the existence of solutions.

In order to consider general nonlinear terms, we have to take into account 
behavior of $g(s)$ near zero and infinity.
For the problem \ef{eq:1.2} with $p=2$ and in  the {\it positive mass} case,
namely when $g(s)$ satisfies
$$-\infty<\liminf_{s\to 0^+} \frac{g(s)}{s} \le
\limsup_{s \to 0^+} \frac{g(s)}{s} =-m \ \mbox{for some} \ m>0,$$
almost optimal condition for the existence of nontrivial solutions has been obtained in \cite{BL}. 
(See also \cite{HIT}.) Conversely, whenever $m=0$,
the so called {\it zero mass} case, some results are contained, 
among others, in \cite{Ta}, if $g$ corresponds to the critical power
$s^{(N+2)/(N-2)}$, and in \cite{AP07,BPR,BL}, if $g$ is
supercritical near the origin and subcritical at infinity (see
also \cite{BM} for the case of exterior domain and \cite{AP3} for
complex valued solutions).

We anticipate that our problem has two quite interesting features: 
we can treat the zero mass case and the positive mass one in a similar way and, 
moreover, in the zero mass case, we can treat several pure power nonlinearities. 
This is due to the particular functional setting that we will introduce to study \eqref{eq:1.1}. 
Indeed, while in presence of a single $p$-laplacian the natural framework is $D^{1,p}(\RN)$, 
namely the completion of $C^\infty_0(\RN)$ with the respect of the $L^p$-norm of the gradient, 
and we know that $D^{1,p}(\RN)$ is embedded only into $L^{p^*}(\RN)$, 
where $p^*=(pN)/(N-p)$, in our case we will introduce a combination of Sobolev spaces, 
a sort of intersection between $D^{1,p}(\RN)$ and $D^{1,q}(\RN)$, 
which guarantees suitable embeddings properties into a large range of Lebesgue spaces 
(see Section \ref{se:vs} for more details). 
Finally, we would like to stress that the unique assumption on $q$ is that 
it is strictly greater than $p$ but it can be large as we want, 
as this is a great help in order to better approximate the Born-Infeld operator.

\medskip

We can now introduce our precise assumptions and results 
and we start dealing with the zero mass case.
The following hypotheses can be regarded as a natural extension 
of the zero mass case for \ef{eq:1.2} to the quasilinear problem \ef{eq:1.1}. 

On the nonlinearity $g$, we require that
\begin{enumerate} [label=(g\arabic*),ref=g\arabic*]
\item \label{g1} $g\in C(\R,\R)$, $g(s) \equiv 0$ if $s \le 0$;
\item \label{g2} for all $\ell\in [p,p^*]$, it holds
$$ 
-\infty 
\le \limsup_{s \to 0^+} \frac{g(s)}{s^{\ell-1}} \le 0,$$
where $p^*=\frac{pN}{N-p}\in (p,+\infty)$;
\item \label{g3}
if $q<N$, it holds that
\begin{equation} \label{eq:1.6}
\displaystyle -\infty \le \limsup_{s \to +\infty}
\frac{g(s)}{s^{q^*-1}} \le 0,
\end{equation}
where $q^*=\frac{qN}{N-q}\in (p^*,+\infty)$; instead, if $q \ge N$, 
we assume \ef{eq:1.6} holds for some $q^*> \max{ \{q,p^*\}}$;
\item \label{g4} there exists $\zeta>0$ such that $G(\zeta)=\int_0^{\zeta} g(s) \,ds >0$.
\end{enumerate}

As we will see in Section \ref{se:vs}, 
the exponents $p^*$ and $q^*$ appear naturally if we consider embedding theorems for
energy spaces associated with \ef{eq:1.1}.
Especially, if $q<N$, since $q^*$ can be seen as a critical exponent for \ef{eq:1.1},
the condition (\ref{g3}) implies that the nonlinear term $g(s)$ has $W^{1,q}$-subcritical
growth at infinity.
On the other hand, (\ref{g2}) means that $g(s)$ has zero mass,
as well as $W^{1,p}$-supercritical growth near zero.
Here we list typical examples of $g(s)$.
\begin{itemize}
\item $g(s)= \min \{ |s|^{q^*-2}s, |s|^{\ell-2}s \}$ for $p^* < \ell < q^*$.
\item $g(s)= |s|^{\ell-2}s$ for $p^* < \ell < q^*$.
\item $g(s)= K|s|^{\ell_1-2}s-|s|^{\ell_2-2}s$ for $p^*<\ell_1 \le q^*$, 
$\ell_1<\ell_2$ and large $K>0$.
\item $g(s)= -|s|^{\ell_1-2}s+|s|^{\ell_2-2}s$ for $p^* \le \ell_1<\ell_2<q^*$.
\end{itemize}
As the second example shows, we can consider a large class of pure power nonlinearities 
for our problem \ef{eq:1.1}, which is impossible for \ef{eq:1.3}.
Especially in the case $q>N$, let $\ell>\frac{pN}{N-p}$ be arbitrarily given
and consider the nonlinear term $g(s)=s^{\ell-1}$.
Then choosing any $q^*> \max \{ \ell,q \}$, 
we see that \eqref{g1}-\eqref{g4} are all satisfied.
In this setting, we have the following result.
\begin{theorem}\label{thm:1.1}
Assume \eqref{g1}-\ad{\eqref{g4}}. Then problem \ef{eq:1.1} has a solution
which is positive and radially symmetric and belongs to $C_{\rm loc}^{1,\sigma}(\RN)\cap L^\infty (\RN)$, 
for some $\sigma \in (0,1)$.
\end{theorem}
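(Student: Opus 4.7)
The plan is to realize a solution as a ground state obtained by variational methods in the radial space $\CHR$ introduced in Section~\ref{se:vs}. The natural action functional is
\[
I(u) = \frac{1}{p}\int_{\RN}|\nabla u|^p + \frac{1}{q}\int_{\RN}|\nabla u|^q - \int_{\RN} G(u).
\]
The fundamental obstruction, already highlighted in the introduction, is the absence of a single dilation converting $-\Delta_p u - \Delta_q u = \theta g(u)$ into the unconstrained equation: because the two operators have different homogeneity, any scaling $u\mapsto u(\cdot/\sigma)$ that absorbs one of them cannot absorb the other. Consequently, the Berestycki--Lions procedure of minimizing the gradient term under $\int_{\RN} G(u)=1$ and then rescaling cannot be directly transplanted.

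I would instead work on the Pohozaev-type manifold
\[
\mathcal{N} = \Bigl\{u \in \CHR\setminus\{0\} : P(u) := \tfrac{N-p}{p}\|\nabla u\|_p^p + \tfrac{N-q}{q}\|\nabla u\|_q^q - N\int_{\RN} G(u)=0\Bigr\}.
\]
For every $u$ with $\int_{\RN} G(u)>0$ (non-empty by \eqref{g4}), a direct computation shows that $\sigma\mapsto I(u_\sigma)$, with $u_\sigma(x)=u(x/\sigma)$, admits a unique maximizer $\sigma_u>0$ and that $u_{\sigma_u}\in\mathcal{N}$; when $q\ge N$ the sign of $N-q$ changes and a more careful analysis of this dilation map is required, exploiting that $q^*$ remains a subcritical exponent for the embeddings of Section~\ref{se:vs}. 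On $\mathcal{N}$ the functional simplifies to
\[
I(u)=\frac{1}{N}\Bigl(\|\nabla u\|_p^p + \|\nabla u\|_q^q\Bigr),
\]
and \eqref{g2}--\eqref{g3}, combined with the Sobolev-type inequalities of Section~\ref{se:vs}, provide a uniform positive lower bound for $\|\nabla u\|_p^p+\|\nabla u\|_q^q$ on $\mathcal{N}$, so that $c^*:=\inf_{\mathcal{N}}I>0$.

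Next I would take a minimizing sequence $\{u_n\}\subset\mathcal{N}$. The identity above ensures boundedness in $\CHR$, and the compact radial embeddings established in Section~\ref{se:vs} yield $\int_{\RN} G(u_n)\to \int_{\RN} G(u)$ for a weak limit $u$. The lower bound on the gradient terms prevents $u\equiv 0$. Weak lower semicontinuity of the gradient norms gives $P(u)\le 0$; if this inequality were strict, one could dilate $u$ to $u(\cdot/\sigma_u)\in\mathcal{N}$ with $\sigma_u<1$, yielding $I(u_{\sigma_u})<\tfrac{1}{N}(\|\nabla u\|_p^p+\|\nabla u\|_q^q)\le c^*$, which contradicts the definition of $c^*$. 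Hence $u\in\mathcal{N}$ and $u$ is a minimizer.

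Finally, verifying that $\mathcal{N}$ is a natural constraint --- by using the dilation flow to show that the Lagrange multiplier associated to the constraint $P(u)=0$ must vanish --- promotes the constrained minimizer to an honest weak solution of \eqref{eq:1.1}. Positivity is immediate: by \eqref{g1}, $g\equiv 0$ on $(-\infty,0]$, so testing the equation with $u^-$ forces $u\ge 0$, and the strong maximum principle for the $(p,q)$-Laplacian yields $u>0$. The local $C^{1,\sigma}$-regularity and the $L^\infty$-bound then follow from Moser iteration and the standard quasilinear regularity theory, exploiting the admissible growth of $g$ provided by \eqref{g2}--\eqref{g3}; the decay at infinity is a consequence of radiality together with membership in $\CHR$. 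The step I expect to be the most delicate is the verification that $\mathcal{N}$ is a natural constraint --- because the two gradient norms scale differently, the usual implicit-function argument must be carefully adapted --- together with the analysis of the dilation map when $q\ge N$, where $P$ has indefinite sign structure.
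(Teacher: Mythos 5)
Your route (minimization on the Pohozaev manifold $\mathcal{N}$) is genuinely different from the paper's, which instead truncates $g=g_1-g_2$, applies Jeanjean's monotonicity trick to the perturbed functionals $I_\lambda$ on $\CH_{0,\rm rad}$ to get bounded Palais--Smale sequences for a.e.\ $\lambda$, and then lets $\lambda_n\nearrow 1$, using the Pohozaev identity only a posteriori to bound the resulting solutions. Unfortunately your route has gaps that are not cosmetic. The decisive one is the step you yourself flag as delicate: showing $\mathcal{N}$ is a natural constraint. The Lagrange condition $I'(u)=\mu P'(u)$ is the weak formulation of $-(1-\mu(N-p))\Delta_p u-(1-\mu(N-q))\Delta_q u=(1-\mu N)g(u)$; to kill $\mu$ you must apply the Pohozaev identity to \emph{this} equation, which requires $C^{1,\sigma}_{\rm loc}$ regularity of $u$ (via \cite{HL} and \cite{DMS}), and that regularity is only available once you know the coefficients keep the operator elliptic, i.e.\ $1-\mu(N-p)>0$ and $1-\mu(N-q)>0$ --- something you cannot assume before knowing $\mu=0$. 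Even granting regularity, the identity you obtain is $\mu\big[(N-p)\|\nabla u\|_p^p+(N-q)\|\nabla u\|_q^q\big]=0$, which forces $\mu=0$ only when $q<N$; for $q\ge N$ (a case Theorem \ref{thm:1.1} explicitly covers, and the one most relevant to the Born--Infeld approximation) the bracket can vanish and the argument collapses. In that same regime the fibering map $\sigma\mapsto a\sigma^{N-p}+b\sigma^{N-q}-c\sigma^N$ loses the unique-maximizer property (the critical-point equation $(N-p)a\sigma^{-p}+(N-q)b\sigma^{-q}=Nc$ is no longer monotone in $\sigma$ when $N-q<0$), and the term $\tfrac{N-q}{q}\|\nabla u\|_q^q$ enters $P$ with a negative coefficient, so weak lower semicontinuity no longer yields $P(u)\le 0$ for the weak limit. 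This is precisely the loss of scaling structure that Remark \ref{rem:2.8} points to when explaining why the authors could not characterize the ground state by constrained minimization.

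Two further steps are stated incorrectly even in the case $q<N$. First, the embeddings of Section \ref{se:vs} are continuous, not compact, and $G$ is allowed to be exactly critical-type at the endpoints $p^*$ and $q^*$ up to little-$o$ corrections; what the paper actually proves (Lemma \ref{lem:2.5}, via the Radial Lemma \ref{lem:a.1} and Strauss's Lemma \ref{lem:a.2}) is $\int_{\RN}G_1(u_n)\to\int_{\RN}G_1(u)$ only, while for $G_2\ge 0$ one has merely Fatou's inequality. So your claim $\int_{\RN}G(u_n)\to\int_{\RN}G(u)$ is false as stated; the argument can be repaired with one-sided inequalities, but it must be rewritten in terms of the decomposition $G=G_1-G_2$. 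Second, ``the lower bound on the gradient terms prevents $u\equiv 0$'' does not follow: weak convergence can lose gradient norm. Nontriviality of the weak limit has to come from the nonlinear term, e.g.\ from $N\int_{\RN}G_1(u_n)\ge \tfrac{N-p}{p}\|\nabla u_n\|_p^p+\tfrac{N-q}{q}\|\nabla u_n\|_q^q\ge c>0$ on $\mathcal{N}$ combined with the Strauss-type convergence of $\int_{\RN}G_1(u_n)$, exactly as in the paper's proof of \eqref{eq:2.22}. As written, your proposal does not yield Theorem \ref{thm:1.1} in the generality claimed; the paper's detour through $I_\lambda$ and the monotonicity trick is what allows it to avoid both the natural-constraint issue and the sign problems when $q\ge N$.
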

Moreover we will prove that there exists a {\em radial ground state solution}, 
namely a solution of \eqref{eq:1.1} which minimizes the action functional 
among all nontrivial radial solutions of \eqref{eq:1.1} 
(see Theorem \ref{thm:2.7} for the precise statement).

Next we state a result for the positive mass case for \ef{eq:1.1}.
In this case, we assume the following condition \ad{instead of assumption \eqref{g2}}:
\begin{enumerate} 
[label=(g\arabic*'),ref=g\arabic*']
\setcounter{enumi}{1}
\item \label{g2'}there exist $\ell\in [p,p^*)$ and $m_{\ell}>0$ such that 
\begin{equation} \label{eq:1.7}
-\infty<\liminf_{s\to 0^+} \frac{g(s)}{s^{\ell-1}} \le
\limsup_{s \to 0^+} \frac{g(s)}{s^{\ell-1}} =-m_\ell.
\end{equation}
\end{enumerate}
We note that if \eqref{eq:1.7} holds for $\ell=p^*$, (g2) is fulfilled. 
Then we obtain the following result.

\begin{theorem}\label{thm:1.2}
Assume \eqref{g1}, \eqref{g2'}, \eqref{g3} and \eqref{g4}. 
Then problem \ef{eq:1.1} has a solution
which is positive and radially symmetric and belongs to $C_{\rm loc}^{1,\sigma}(\RN)\cap L^\infty (\RN)$, 
for some $\sigma \in (0,1)$.
\end{theorem}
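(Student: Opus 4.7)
The plan is to find a positive radial solution of \ef{eq:1.1} as a nontrivial critical point of the action
$$ J(u)=\tfrac{1}{p}\int_{\RN}|\nabla u|^p+\tfrac{1}{q}\int_{\RN}|\nabla u|^q-\int_{\RN} G(u), $$
set on the intersection space $E$ announced in Section~\ref{se:vs}, which simultaneously controls $\|\nabla u\|_p$, $\|\nabla u\|_q$ and, through the associated Sobolev embeddings, the norms in $L^{p^*}$ and $L^{q^*}$. Under \eqref{g2'} one additionally has $G(s)\le -\tfrac{m_\ell}{2}s^\ell$ for $s\ge 0$ small, which supplies an $L^\ell$ control of $u$ on sublevel sets and makes $J$ well defined and of class $C^1$ on $E$. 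By Palais' principle of symmetric criticality, it is enough to find critical points of $J$ restricted to the radial subspace $E_{\rm rad}$, where one gets a stronger Strauss-type compactness in intermediate Lebesgue spaces.

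The central difficulty is that the classical Berestycki--Lions scaling used in \cite{BL,DM} fails here: under $u(\cdot)\mapsto u(\cdot/\lambda)$ the two kinetic pieces rescale with different exponents $\lambda^{N-p}$ and $\lambda^{N-q}$, so one cannot reduce to a constrained minimization of a single homogeneous functional. To bypass this, I would use Jeanjean's monotonicity trick applied to the family
$$ J_\theta(u)=\tfrac{1}{p}\|\nabla u\|_p^p+\tfrac{1}{q}\|\nabla u\|_q^q-\theta\int_{\RN}G(u),\qquad \theta\in[1,2]. $$
Mountain-pass geometry is routine to verify in $E_{\rm rad}$: for $\|u\|_E$ small, \eqref{g2'} together with the embeddings of $E$ gives $J_\theta(u)\ge c\|u\|_E^p-C\|u\|_E^r$ with $r>p$, so $J_\theta$ is uniformly positive on a small sphere; for any profile $w\in E_{\rm rad}$ with $\int G(w)>0$ (available by \eqref{g4} after truncation and cut-off), the dilation $w_\lambda(x)=w(x/\lambda)$ satisfies $J_\theta(w_\lambda)\to-\infty$ as $\lambda\to+\infty$, since $\theta\lambda^N\int G(w)$ dominates both $\lambda^{N-p}\|\nabla w\|_p^p$ and $\lambda^{N-q}\|\nabla w\|_q^q$. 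Jeanjean's argument then produces, for a.e.\ $\theta\in[1,2]$, a bounded $(PS)_{c_\theta}$-sequence for $J_\theta$, and the Pohozaev identity
$$ \tfrac{N-p}{p}\|\nabla u\|_p^p+\tfrac{N-q}{q}\|\nabla u\|_q^q=N\int_{\RN}G(u) $$
for the associated critical points lets one pass to the limit $\theta\downarrow 1$ while keeping the approximating solutions bounded and away from $0$ in $E_{\rm rad}$, yielding a nontrivial critical point of $J=J_1$ at the mountain-pass level $c_1$.

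Finally I would conclude with compactness, regularity and positivity. The compact embedding of $E_{\rm rad}$ into suitable intermediate Lebesgue spaces (Section~\ref{se:vs}), combined with the growth bounds \eqref{g2'}--\eqref{g3}, makes $u\mapsto\int G(u)$ and $u\mapsto\int g(u)\varphi$ sequentially weakly continuous on bounded sets of $E_{\rm rad}$; a standard Minty-type monotonicity argument for the $(p,q)$-operator promotes weak to strong convergence of gradients, so the weak limit $u$ is a genuine weak solution of \ef{eq:1.1}. Since $g\equiv 0$ on $(-\infty,0]$ by \eqref{g1}, testing with $u^-\in E$ forces $u\ge 0$; Moser iteration driven by \eqref{g3} gives $u\in L^\infty(\RN)$, after which the DiBenedetto--Tolksdorf theory upgrades $u$ to $C^{1,\sigma}_{\rm loc}(\RN)$ and the strong maximum principle for $-\Delta_p u-\Delta_q u$ yields $u>0$. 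The hardest step of the whole scheme is the Jeanjean passage $\theta\downarrow 1$: the Pohozaev identity must be rigorously derived for weak solutions in the combined Sobolev framework of Section~\ref{se:vs}, and the approximating sequence must be shown to be tight and uniformly bounded away from zero even as the negative mass is slightly weakened.
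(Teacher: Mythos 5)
Your overall strategy (radial framework on the intersection space, Jeanjean's monotonicity trick to get bounded Palais--Smale sequences, Pohozaev identity to control the approximating solutions, Strauss-type compactness, then Moser iteration, $C^{1,\sigma}$ regularity and a maximum/Harnack principle) is the same as the paper's. However, there is a genuine gap in how you set up the monotonicity trick. You perturb the \emph{whole} nonlinear term, taking $J_\theta(u)=A(u)-\theta\int_{\RN}G(u)$ with $\theta\in[1,2]$. The monotonicity trick (Proposition \ref{prop:2.1}) requires the perturbed functional $B$ to be non-negative: this is what makes $\theta\mapsto c_\theta$ monotone, hence differentiable a.e., which is the engine producing bounded Palais--Smale sequences, and it is also what gives the uniform bound $c_{\lambda_n}\le c_{\lambda_0}$ used later to bound the approximating critical points. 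In the positive mass case $G(s)\sim-\frac{m_\ell}{\ell}s^\ell<0$ near $0$, so $\int G(u)$ is sign-changing and your family does not satisfy the hypotheses; $c_\theta$ need not be monotone and the argument breaks. The paper avoids this by decomposing $g=g_1-g_2$ with $g_1=(g+m_\ell s^{\ell-1})_+\ge 0$ and $g_2=g_1-g\ge 0$, placing $\int G_2$ inside $A$ and letting the parameter multiply only $B(u)=\int G_1(u)\ge 0$, with $\lambda\in[\lambda_0,1]$ and $\lambda\nearrow 1$. Your sketch never introduces this decomposition, and without it (or some substitute) the central existence mechanism does not run.

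A second, smaller omission concerns the boundedness of the approximating solutions $v_n$ as the parameter tends to $1$. The Pohozaev identity combined with the energy bound controls only $\|\nabla v_n\|_p^p+\|\nabla v_n\|_q^q$; in the positive mass case the natural space carries the additional norm $\|u\|_\ell$, and the $L^\ell$ bound does not follow from Pohozaev. The paper obtains it from the Nehari-type identity $I_{\lambda_n}'(v_n)[v_n]=0$ together with $g_2(s)\ge m_\ell s^{\ell-1}$ and $g_1(s)\le\varepsilon g_2(s)+C_\varepsilon s^{q^*-1}$ (Lemma \ref{lem:3.4}); your plan should include this step, as it is precisely where the positive mass case differs from the zero mass one. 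The remaining ingredients you list (mountain pass geometry, Minty-type identification of the weak limit, regularity and positivity) are consistent with the paper, and your emphasis on the rigorous derivation of the Pohozaev identity is well placed: the paper handles it via $C^{1,\sigma}_{\rm loc}$ regularity and the Pucci--Serrin identity of \cite{DMS}.
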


Also in this case, the existence of a radial ground state solution of \ef{eq:1.1} 
can be also obtained, see Theorem \ref{thm:3.5} below.

We believe that assumptions \eqref{g1}, \eqref{g2}-\eqref{g2'}, \eqref{g3} and \eqref{g4}
are almost optimal for the existence of non-trivial solutions of 
\ef{eq:1.1} when $q<N$.
We finally remark that very general quasilinear elliptic equations have been treat also, 
for example, in \cite{ADP,FLS,SS}, but our problem does not fall in the studied cases.

This paper is organized as follows. In Section \ref{se:0}, we consider the zero mass case.
First we prepare embedding theorems for energy spaces associated with \ef{eq:1.1}
and perform the variational setting in Section \ref{se:vs}.
Next in Section \ref{se:exi}, we prove the existence of a positive radial solution of \ef{eq:1.1}
by using the Mountain Pass Theorem together with Jeanjean's Monotonicity trick \cite{J}.
We consider the positive mass case in Section \ref{se:pos} and, 
finally, we devote the Section \ref{se:appr} to the study 
of the $k$-th order approximated problem \eqref{eq:1.4}. 

\section{The zero mass case} \label{se:0}

\subsection{Variational setting and preliminaries} \label{se:vs}

\ 

\smallskip
In this section, we give some preliminaries. 
First, we introduce the framework where we will study \ef{eq:1.1} 
and present its  embedding properties.
Next, we introduce the energy functional associated with \ef{eq:1.1} and
modify the nonlinear term in order to find a nontrivial critical point.

We work on the functional space $\CH_0$ which is
given by $\CH_0= \overline{ C_0^{\infty}(\RN)}^{\ \| \, \cdot \, \|_{\CH_0}}$, where
\[
\| u\|_{\CH_0}:= \|\nabla u\|_p +\|\nabla u\|_q.
\]

In the following theorem we study the embeddings properties of $\CH_0$.
\begin{theorem}\label{th:embedding}
Let $1<p<q$ and $p<N$. Then 
\[
\CH_0 \hookrightarrow L^r(\RN), \quad \hbox{ for any } \
\frac{pN}{N-p}\le r \ 
\begin{cases}
\le \frac{qN}{N-q} & \hbox{if }q<N,
\\
< + \infty & \hbox{if } q=N, \\
\le +\infty & \hbox{if } q> N.
\end{cases}
\]
\end{theorem}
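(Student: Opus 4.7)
The plan is to establish the inequality $\|u\|_r \leq C\|u\|_{\CH_0}$ first on the dense subspace $C_0^\infty(\RN)$ and then pass to the completion by density. Two classical ingredients anchor the proof. Firstly, the Sobolev embedding $D^{1,p}(\RN)\hookrightarrow L^{p^*}(\RN)$ gives
\[
\|u\|_{p^*}\leq C\,\|\nabla u\|_p\leq C\,\|u\|_{\CH_0},
\]
which handles the lower endpoint $r=p^*$ uniformly in all three cases. Secondly, one must exploit the additional information $\nabla u\in L^q(\RN)$ in a way that depends on whether $q$ lies below, at, or above the dimension.

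When $q<N$, the Sobolev inequality $D^{1,q}(\RN)\hookrightarrow L^{q^*}(\RN)$ also provides $\|u\|_{q^*}\leq C\,\|\nabla u\|_q$. For any intermediate $r\in[p^*,q^*]$, the log-convexity of $L^r$-norms with exponent $\theta\in[0,1]$ defined by $\frac{1}{r}=\frac{\theta}{p^*}+\frac{1-\theta}{q^*}$ yields
\[
\|u\|_r\leq \|u\|_{p^*}^{\theta}\,\|u\|_{q^*}^{1-\theta}\leq C\,\|\nabla u\|_p^{\theta}\,\|\nabla u\|_q^{1-\theta}\leq C\,\bigl(\|\nabla u\|_p+\|\nabla u\|_q\bigr),
\]
where the last step is weighted AM--GM. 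When $q>N$, Morrey's inequality furnishes $[u]_{C^{0,1-N/q}}\leq C\,\|\nabla u\|_q$; coupling this H\"older-seminorm bound with $u\in L^{p^*}(\RN)$ through a standard ball-averaging argument produces $\|u\|_\infty\leq C\,\|u\|_{\CH_0}$, after which straightforward interpolation between $L^{p^*}$ and $L^\infty$ covers every $r\in[p^*,+\infty]$. In the borderline case $q=N$, I would invoke the Gagliardo--Nirenberg inequality
\[
\|u\|_r\leq C\,\|\nabla u\|_N^{\,a}\,\|u\|_{p^*}^{\,1-a},\qquad a:=1-\frac{p^*}{r}\in[0,1),
\]
valid for every finite $r\geq p^*$, which combined with the $L^{p^*}$-estimate delivers the bound.

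The main obstacle is precisely the borderline case $q=N$: the space $D^{1,N}(\RN)$ does not embed into any single $L^r$, so one cannot argue by pure interpolation between two Sobolev embeddings as in the sub-critical case. The trick is to feed the $L^{p^*}$-bound coming from the $p$-gradient as auxiliary data into the Gagliardo--Nirenberg inequality, which is exactly the extra information that the combined norm $\|\nabla u\|_p+\|\nabla u\|_q$ makes available. Once all three pointwise estimates hold on $C_0^\infty(\RN)$, any $\|\cdot\|_{\CH_0}$-Cauchy sequence is automatically $\|\cdot\|_r$-Cauchy, so the $\CH_0$-limit identifies with an element of $L^r(\RN)$ and the embedding is proved.
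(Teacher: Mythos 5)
Your proposal is correct and follows essentially the same three-case strategy as the paper: Sobolev plus interpolation for $q<N$, a Morrey-type averaging argument giving $\|u\|_\infty\le C\|u\|_{\CH_0}$ for $q>N$, and an interpolation estimate anchored at $L^{p^*}$ for the borderline $q=N$. The only difference is cosmetic: where you invoke the limiting Gagliardo--Nirenberg inequality $\|u\|_r\le C\|\nabla u\|_N^{a}\|u\|_{p^*}^{1-a}$ as a known result (correctly noting $a<1$), the paper derives the equivalent bound $\|u\|_r\le C(\|\nabla u\|_p+\|\nabla u\|_N)$ by hand, iterating the $L^1$-based Sobolev inequality applied to $|u|^{m-1}u$ starting from $m$ with $\frac{(m-1)N}{N-1}=p^*$.
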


\begin{proof}
We distinguish three different cases.

\medskip
\noindent{\sc Case  $1<p<q<N$.}
\\
By standard Sobolev inequalities, we have that
$$
\| u\|_{\frac{pN}{N-p}} \le C \| \nabla u\|_p \le C \|u\|_{\CH_0},
\quad
\| u\|_{\frac{qN}{N-q}} \le C \| \nabla u\|_q\le C \|u\|_{\CH_0}
$$
and so
\begin{equation*}
\CH_0 \hookrightarrow L^{\frac{pN}{N-p}}(\RN)\cap L^{\frac{qN}{N-q}}(\RN).
\end{equation*}

\medskip
\noindent
{\sc Case  $1<p<q=N$.}
\\
Going back the proof of the Sobolev inequality, if $u\in C_0^{\infty}(\RN)$, one has
\begin{equation} \label{eq:a.1}
\| u\|_{\frac{N}{N-1}} \le \prod_{i=1}^N 
\left\| \frac{\partial u}{\partial x_i} \right\|_1^{\frac{1}{N}}.
\end{equation}
(See \cite[(19), P.\,280]{B}.)
Let $m \ge 1$. Applying \ef{eq:a.1} to $|u|^{m-1}u$, we get
$$
\| u\|_{\frac{mN}{N-1}}^m
\le m \prod_{i=1}^N \left\| |u|^{m-1} \frac{\partial u}{\partial x_i} \right\|_1^{\frac{1}{N}}
\le C \| \nabla u\|_N \| u\|_{\frac{(m-1)N}{N-1}}^{m-1}.$$
By the Young inequality, it follows that
\begin{equation} \label{eq:a.2}
\| u\|_{\frac{mN}{N-1}} \le C( \| u\|_{\frac{(m-1)N}{N-1}} + \| \nabla u\|_N)
\quad \hbox{for any} \ m \ge 1.
\end{equation}
In \ef{eq:a.2}, we first choose $\frac{(m-1)N}{N-1}=\frac{pN}{N-p}$, that is, 
$m= \frac{(N-1)p}{N-p}+1$. Writing $p^*=\frac{pN}{N-p}$ for simplicity, 
one has $m=\frac{N-1}{N}p^*+1$ and $\frac{mN}{N-1}=p^*+\frac{N}{N-1}$.
Thus from \ef{eq:a.2}, we obtain
$$
\| u\|_{p^*+\frac{N}{N-1}}
\le C( \| u\|_{p^*} + \| \nabla u\|_N)
\le C( \| \nabla u\|_p + \| \nabla u\|_N).$$
Iterating this procedure with $m=\frac{N-1}{N}p^*+j$ for $j \in \mathbb{N}$,
and applying the interpolation inequality, one gets
$$
\| u\|_r \le C( \| \nabla u\|_p +\| \nabla u\|_N)
\quad \hbox{for all} \ u\in C_0^{\infty}(\RN) 
\ \hbox{and} \ r\in[p^*,+\infty).$$
This completes the proof by a density argument.

\medskip
\noindent
{\sc Case  $1<p<N<q$.}
\\
We argue as in \cite{FOP}. Let  $u\in C_0^{\infty}(\RN)$, 
$x\in \RN$ and $Q$ be an open cube, containing $x$, 
whose sides -of length $1$- are parallel to the coordinate axes.
Going back to the proof of the Morrey inequality, we have
$$
|\bar{u}-u(x)| \le \frac{q}{q-N} \| \nabla u\|_{L^q(Q)},$$
where $\bar{u}=\frac{1}{|Q|}\int_Q u(x) \,dx$.
(See \cite[(27), P.\,283]{B} for the proof.)
By the H\"older inequality, we arrive at
\begin{align*}
|u(x)| &\le \left| \frac{1}{|Q|} \int_Q u(x)\,dx \right| +C \| \nabla u\|_{L^q(Q)} 
\le C \| u\|_{L^{p^*}(Q)} + C \| \nabla u\|_{L^q(Q)} \\
&\le C( \| u\|_{L^{p^*}(\RN)} + \| \nabla u\|_{L^q(\RN)})
\le C( \| \nabla u\|_{L^p(\RN)} + \| \nabla u\|_{L^q(\RN)}),
\end{align*}
from which we deduce that
\begin{equation*}
\| u\|_{\infty} \le C (\| \nabla u\|_p+\| \nabla u\|_q).
\end{equation*} 
Again, we conclude by a density argument.
\end{proof}

\begin{remark}
By Theorem \ref{th:embedding}, for any $1<p<q$ with $p<N$, 
according with the definitions of $p^*$ and $q^*$ given in the Introduction, one has
\begin{equation}\label{immersione}
\CH_0 \hookrightarrow L^r(\RN)
\ \hbox{for any} \ r\in [p^*,q^*].
\end{equation}
\end{remark}
Moreover the following property will be useful later
\begin{proposition}
Let $1<p<q$ with $p<N$. Then, for any  $\ u\in \CH_0$ and $r\in [p^*,q^*]$, 
we have
\begin{equation} \label{eq:2.4}
\| u\|_r^r \le C( \| \nabla u\|_p^r+\| \nabla u\|_p^{p^*}+\| \nabla u\|_q^{q^*}).
\end{equation}
\end{proposition}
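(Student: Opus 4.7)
My plan is to deduce (\ref{eq:2.4}) from Theorem \ref{th:embedding} by combining a standard interpolation inequality with Young's inequality. Fix $r\in[p^*,q^*]$ and let $\theta\in[0,1]$ be defined by
$$\frac{1}{r}=\frac{1-\theta}{p^*}+\frac{\theta}{q^*}.$$
First I would apply the interpolation (H\"older) inequality $\|u\|_r \le \|u\|_{p^*}^{1-\theta}\|u\|_{q^*}^{\theta}$ and estimate each factor via Theorem \ref{th:embedding}: classical Sobolev embedding gives $\|u\|_{p^*}\le C\|\nabla u\|_p$, while the embedding $\CH_0\hookrightarrow L^{q^*}(\RN)$ recorded in (\ref{immersione}) — which holds uniformly in the three cases $q<N$, $q=N$, $q>N$ — yields $\|u\|_{q^*}\le C\|u\|_{\CH_0}=C(\|\nabla u\|_p+\|\nabla u\|_q)$.

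Substituting and raising to the $r$-th power, together with the elementary inequality $(a+b)^{r\theta}\le C(a^{r\theta}+b^{r\theta})$, I would obtain
$$\|u\|_r^r \le C\|\nabla u\|_p^{r(1-\theta)}\bigl(\|\nabla u\|_p^{r\theta}+\|\nabla u\|_q^{r\theta}\bigr) = C\|\nabla u\|_p^{r} + C\|\nabla u\|_p^{r(1-\theta)}\|\nabla u\|_q^{r\theta},$$
which already accounts for the first summand $\|\nabla u\|_p^r$ in (\ref{eq:2.4}). The extra term is forced on us because, when $q\ge N$, the control of $\|u\|_{q^*}$ unavoidably involves $\|\nabla u\|_p$ as well as $\|\nabla u\|_q$.

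For the remaining cross term I would invoke Young's inequality with conjugate exponents $\alpha=\frac{p^*}{r(1-\theta)}$ and $\alpha'=\frac{q^*}{r\theta}$, which satisfy $\frac{1}{\alpha}+\frac{1}{\alpha'}=1$ precisely by the definition of $\theta$; this transforms the cross term into $C\bigl(\|\nabla u\|_p^{p^*}+\|\nabla u\|_q^{q^*}\bigr)$ and completes the proof of (\ref{eq:2.4}). The only subtle points are the endpoints $\theta=0$ (i.e.\ $r=p^*$) and $\theta=1$ (i.e.\ $r=q^*$), where the Young exponents degenerate; these are handled directly, as for $r=p^*$ the estimate reduces to Sobolev, and for $r=q^*$ one simply raises the bound $\|u\|_{q^*}\le C(\|\nabla u\|_p+\|\nabla u\|_q)$ to the $q^*$-th power. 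I do not expect any serious obstacle beyond some care with these endpoint cases.
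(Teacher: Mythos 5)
Your argument is correct and follows essentially the same route as the paper: interpolate $\|u\|_r$ between $L^{p^*}$ and $L^{q^*}$, bound $\|u\|_{p^*}$ by $\|\nabla u\|_p$ via Sobolev and $\|u\|_{q^*}$ by $\|u\|_{\CH_0}$ via Theorem \ref{th:embedding}, expand the resulting power of $\|\nabla u\|_p+\|\nabla u\|_q$, and dispose of the cross term with Young's inequality. The only cosmetic difference is your choice of the harmonic-mean parametrization $\tfrac1r=\tfrac{1-\theta}{p^*}+\tfrac{\theta}{q^*}$ versus the paper's arithmetic-mean one $r=\theta p^*+(1-\theta)q^*$, which changes nothing of substance.
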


\begin{proof}
Let $u\in \CH_0$ and $r\in [p^*,q^*]$. 
By Theorem \ref{th:embedding}, the interpolation inequality and the Young inequality, we get
\begin{align*} 
\| u\|_r^r &\le \| u\|_{p^*}^{\theta p^*} \| u\|_{q^*}^{(1-\theta)q^*}
\le C \| \nabla u\|_p^{\theta p^*} ( \| \nabla u\|_p+\| \nabla u\|_q)^{(1-\theta)q^*} \\
&\le C \| \nabla u\|_p^{\theta p^*}
( \| \nabla u\|_p^{(1-\theta)q^*}+\| \nabla u\|_q^{(1-\theta)q^*}) \\
&\le C( \| \nabla u\|_p^r+\| \nabla u\|_p^{p^*}+\| \nabla u\|_q^{q^*}).
\end{align*}
Here $\theta\in [0,1]$ is a constant chosen so that $r=\theta p^* +(1-\theta)q^*$.
\end{proof}

Let us define the functional $I:\CH_0 \to \R$ by
$$
I(u)=\frac{1}{p} \|\nabla u\|_p^p
+\frac{1}{q} \|\nabla u\|_q^q 
-\int_{\RN} G(u) \,dx.$$
By hypotheses \ad{\eqref{g1}-\eqref{g3}}, we can see that $I$ is well-defined 
and of class $C^1$ on $\CH_0$.
Moreover any critical points of $I$ are solutions of \ef{eq:1.1}.

Next we truncate and decompose the nonlinear term $g$ similarly as in \cite{BL}. 
Let us put
$$
s_0:= \min\{ s\in [\zeta,+\infty) \mid  g(s)=0 \}$$
and $s_0=+\infty$ if $g(s) \ne 0$ for all $s \ge \zeta$. 
We define $\tilde{g}:\R_+ \to \R$ by
$$
\tilde{g}(s)=\left\{
\begin{array}{ll}
g(s) & \hbox{on} \ [0,s_0],\\
0 & \hbox{on} \ (s_0,+\infty).
\end{array}
\right.$$
By the maximum principle, any positive solutions of \ef{eq:1.1} with $\tilde{g}$ 
satisfy the original problem \ef{eq:1.1}. 
Thus we may replace $g$ by $\tilde{g}$ in \ef{eq:1.1}. 
Hereafter we write $g=\tilde{g}$ for simplicity. 
For $s \ge 0$, we set
$$
g_1(s):= g_+(s), \ \hbox{ and } \ 
g_2(s):=g_1(s)-g(s).$$
Then by \eqref{g2} and \eqref{g3}, one has
\begin{equation} \label{eq:2.7}
\lim_{s \to 0} \frac{g_1(s)}{s^{p^*-1}}=0, \quad
\lim_{s \to +\infty} \frac{g_1(s)}{s^{q^*-1}}=0.
\end{equation}
Thus, for $s \ge 0$, we have from \eqref{eq:2.7} that
\begin{align}
0 &\le \ad{ g_1(s)\le C(s^{p^*-1}+s^{q^*-1})}, \label{eq:2.5}
\\
0&\le g_2(s). \label{eq:2.6}
\end{align}
Hence, denoting $G_i(t)=\int_0^t g_i(s) \,ds$ for $i=1,2$, we get
\begin{equation} \label{eq:2.8}
G_2(s) \ge 0 \ \hbox{for all} \ s\in \R,
\end{equation}
and
\begin{equation}\label{eq:2.9}
\ad{ 0\le G_1(s) \le C (|s|^{p^*}+|s|^{q^*})}
\ \hbox{for all} \ s\in \R.
\end{equation}

\medskip
\subsection{Existence of a positive solution of \ef{eq:1.1}}\label{se:exi}

 \ 
\medskip

In all this section, we assume \eqref{g1}-\ad{\eqref{g4}} and prove Theorem \ref{thm:1.1}. 
To this end, we consider the following auxiliary problem:
\begin{equation} \label{eq:2.10}
-\Delta_p u- \Delta_q u+g_2(u) =\lambda g_1(u) \ \hbox{in} \ \RN
\end{equation}
for $\lambda$ close to $1$. 
Our strategy is to find a solution of \ef{eq:2.10} and pass the limit $\lambda \nearrow 1$. 
We define the functional $I_{\lambda}:\CH_0 \to \R$ by
$$
I_{\lambda}(u)=\frac{1}{p} \|\nabla u\|_p^p
+\frac{1}{q} \|\nabla u\|_q^q 
+\int_{\RN} G_2(u) \,dx
-\lambda \int_{\RN} G_1(u) \,dx.$$
In order to find a non-trivial critical point of $I_{\lambda}$, 
we apply a slightly modified version of the Monotonicity trick
due to \cite{J} (see also \cite{AP}). 

\begin{proposition}[Monotonicity trick] \label{prop:2.1}
Let $\big(X,\|\cdot\|\big)$ be a Banach space and $J\subset\R^+$ an interval.
Consider a family of $C^1$ functionals $I_{\lambda}$ on $X$ defined by
\begin{equation*}
I_\l(u)=A(u)- \l B(u) \ \hbox{for} \ \l\in J,
\end{equation*}
with $B$ non-negative and either $A(u)\to + \infty$ or
$B(u)\to+\infty$ as $\|u\|\to+\infty$ and such that $I_\l(0)=0$.
For any $\l\in J$, we set
\begin{equation} \label{eq:2.11}
\Gamma_\l:=\{\gamma\in C([0,1],X)\mid \gamma(0)=0, \ I_\l(\gamma(1))< 0\}. 
\end{equation}
Assume that for every $\l\in J$, the set $\G_\l$ is non-empty and
\begin{equation} \label{eq:2.12}
c_\l:=\inf_{\gamma\in\Gamma_\l}\max_{t\in[0,1]} I_\l(\gamma(t)) >0.
\end{equation}
Then for almost every $\l\in J$, there is a sequence $\{ v_n \} \subset X$ such that
\begin{itemize}
\item[\rm(i)] $\{ v_n \}$ is bounded in $X$;
\item[\rm(ii)] $I_\l(v_n)\to c_\l$;
\item[\rm(iii)] $(I_\l)'(v_n)\to 0$ in the dual space $X^{-1}$ of $X$.
\end{itemize}
\end{proposition}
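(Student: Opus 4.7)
The plan is in three stages. First, I would observe that $\l\mapsto c_\l$ is monotone non-increasing on $J$: since $B\geq 0$, for $\l_1\leq \l_2$ we have $I_{\l_2}(u)\leq I_{\l_1}(u)$ pointwise, which implies $\Gamma_{\l_1}\subseteq \Gamma_{\l_2}$ (because the endpoint condition $I_{\l_1}(\gamma(1))<0$ forces $I_{\l_2}(\gamma(1))<0$), and hence $c_{\l_2}\leq c_{\l_1}$. By Lebesgue's theorem, $\l\mapsto c_\l$ is then differentiable almost everywhere on $J$, so it suffices to exhibit, at each differentiability point $\l_0\in J$, a bounded Palais--Smale sequence for $I_{\l_0}$ at the level $c_{\l_0}$.

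For boundedness, I would fix such $\l_0$ and a sequence $\l_n\nearrow \l_0$, and pick $\gamma_n\in\Gamma_{\l_n}\subseteq\Gamma_{\l_0}$ with
\[
\max_{t\in[0,1]} I_{\l_n}(\gamma_n(t))\leq c_{\l_n}+(\l_0-\l_n),
\]
then select $v_n=\gamma_n(t_n)$ with $I_{\l_0}(v_n)\geq c_{\l_0}$ (possible since $\gamma_n\in\Gamma_{\l_0}$). Using the identity $I_{\l_n}(v_n)-I_{\l_0}(v_n)=(\l_0-\l_n)B(v_n)$ together with the two bounds on $I_{\l_n}(v_n)$ and $I_{\l_0}(v_n)$ yields
\[
B(v_n)\leq 1+\frac{c_{\l_n}-c_{\l_0}}{\l_0-\l_n},
\]
whose right-hand side has a finite limit at the differentiability point $\l_0$. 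Hence $\{B(v_n)\}$ is bounded; writing $A(v_n)=I_{\l_0}(v_n)+\l_0 B(v_n)$ and using the upper bound on $I_{\l_0}(v_n)$ also bounds $\{A(v_n)\}$; the coercivity alternative on $A$ or $B$ then yields $\sup_n\|v_n\|<\infty$.

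The final step upgrades this bounded near-optimal sequence to a bounded PS sequence by a deformation argument. Assuming for contradiction that $\|I_{\l_0}'(u)\|$ stays bounded away from zero on a set of the form $\{\|u\|\leq R\}\cap\{|I_{\l_0}(u)-c_{\l_0}|\leq\delta\}$, one would construct a locally Lipschitz pseudo-gradient vector field compactly supported in this set and use its flow to push the paths $\gamma_n$ (suitably localized by the previous step) to paths whose maximum of $I_{\l_0}$ is strictly below $c_{\l_0}$, contradicting the definition of $c_{\l_0}$.

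I expect the main obstacle to be this localization: the boundedness step only controls one point $v_n$ on each path, so one must argue that the entire segment of $\gamma_n$ taking values near $c_{\l_0}$ can be brought inside a fixed bounded ball---typically via a preliminary modification replacing $\gamma_n(t)$ by a constant outside a well-chosen sublevel set, in such a way that the altered path still belongs to $\Gamma_{\l_0}$. Once this localization is granted, the deformation and the contradiction with the min-max definition of $c_{\l_0}$ follow along standard lines, essentially as in \cite{J}.
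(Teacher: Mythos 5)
Your outline is correct and is essentially the argument the paper itself relies on: Proposition \ref{prop:2.1} is not proved in the paper but quoted from \cite{J} (see also \cite{AP}), and your three stages --- monotonicity of $\l\mapsto c_\l$ with a.e.\ differentiability, the difference-quotient estimate bounding $B(v_n)$ and hence $A(v_n)$ at a differentiability point, and the pseudo-gradient deformation producing the contradiction --- reproduce that proof. You also correctly flag the only delicate point, the localization of the paths; in \cite{J} this is resolved by noting that the same quotient estimate bounds \emph{every} point $\gamma_n(t)$ with $I_{\l_0}(\gamma_n(t))\ge c_{\l_0}-(\l_0-\l_n)$, not just the single maximizer, so the whole upper portion of each near-optimal path lies in a fixed ball.
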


In our case, we set $X=\CH_{0,{\rm rad}}$, where
\[
\CH_{0,{\rm rad}}=\{u\in \CH_0\mid u \hbox{ is radially symmetric }\},
\] 
and 
\begin{align*}
A(u)&=\frac{1}{p} \|\nabla u\|_p^p
+\frac{1}{q} \|\nabla u\|_q^q 
+\int_{\RN} G_2(u) \,dx,
\\
B(u) &=\int_{\RN} G_1(u) \,dx.
\end{align*}
To apply Proposition \ref{prop:2.1}, we begin with the following lemma.
\begin{lemma}\label{lem:2.2}
There exists $\lambda_0 \in (0,1)$ such that 
the set $\Gamma_\l$ defined in \ef{eq:2.11} is non-empty 
for every $\lambda \in J=[\lambda_0,1]$.
\end{lemma}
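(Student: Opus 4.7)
The key observation is that $B(u) = \int_{\RN} G_1(u)\,dx \ge 0$, so for each fixed $u$ the map $\lambda \mapsto I_\lambda(u) = A(u) - \lambda B(u)$ is nonincreasing. Hence it suffices to exhibit a single $\bar u \in \CH_{0,\mathrm{rad}}$ with $I_1(\bar u) < 0$ strictly: continuity of $\lambda \mapsto I_\lambda(\bar u)$ will then produce $\lambda_0 \in (0,1)$ with $I_{\lambda_0}(\bar u) < 0$, and monotonicity gives $I_\lambda(\bar u) \le I_{\lambda_0}(\bar u) < 0$ for every $\lambda \in [\lambda_0, 1]$. The linear path $\gamma(t) := t\bar u$ is continuous into $\CH_{0,\mathrm{rad}}$ and satisfies $\gamma(0)=0$, $I_\lambda(\gamma(1)) < 0$, so it will lie in $\Gamma_\lambda$.

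The plan is to produce $\bar u$ via a Berestycki--Lions-type scaling. Using \eqref{g4}, I first build a smooth radial $w \in C_c^\infty(\RN)$ with $\int_{\RN} G(w)\,dx > 0$: a cutoff equal to $\zeta$ on a large ball $B_R$ and vanishing outside $B_{R+1}$ works, since $G$ is bounded on $[0,\zeta]$ and the bulk term $G(\zeta)|B_R|$ dominates the annular correction once $R$ is sufficiently large. Then, for $\sigma > 0$, set $w_\sigma(x) := w(x/\sigma)$; a change of variables (together with $G = G_1 - G_2$) yields
\[
I_1(w_\sigma) = \frac{\sigma^{N-p}}{p} \|\nabla w\|_p^p + \frac{\sigma^{N-q}}{q} \|\nabla w\|_q^q - \sigma^N \int_{\RN} G(w)\,dx.
\]
As $\sigma \to +\infty$, the potential term $\sigma^N \int G(w)$ dominates both gradient terms regardless of whether $q<N$, $q=N$, or $q>N$, so $I_1(w_\sigma) \to -\infty$; fixing $\sigma$ large, I set $\bar u := w_\sigma \in \CH_{0,\mathrm{rad}}$.

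Continuity in $\lambda$ finishes the argument: since $\bar u$ is bounded with compact support, the polynomial bound \eqref{eq:2.5} on $g_1$ gives $\int_{\RN} G_1(\bar u)\,dx < +\infty$, and hence
\[
I_\lambda(\bar u) = I_1(\bar u) + (1-\lambda)\int_{\RN} G_1(\bar u)\,dx
\]
depends continuously on $\lambda$; the strict negativity $I_1(\bar u) < 0$ then yields $I_{\lambda_0}(\bar u) < 0$ for some $\lambda_0 \in (0,1)$. No single step is particularly delicate; the only point deserving attention is making the scaling argument uniform across the three regimes of $q$ versus $N$, which is immediate since $p, q > 0$ forces $\sigma^{N-p}$ and $\sigma^{N-q}$ to be of strictly lower order than $\sigma^N$ as $\sigma \to +\infty$.
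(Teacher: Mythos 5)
Your proof is correct and follows essentially the same route as the paper: a Berestycki--Lions truncation/dilation produces a single function at which the functional is strictly negative, uniformly for $\lambda$ in an interval $[\lambda_0,1]$. The only cosmetic differences are that you obtain the uniformity in $\lambda$ via monotonicity and continuity of $\lambda\mapsto I_\lambda(\bar u)$ rather than by first choosing $\lambda_0$ so that $\lambda_0\int_{\RN} G_1(z)\,dx>\int_{\RN} G_2(z)\,dx$, and that you join $0$ to $\bar u$ by the linear segment $t\mapsto t\bar u$ instead of the paper's spliced dilation path --- both choices are perfectly fine (your linear path even sidesteps the non-continuity of the pure dilation path at $t=0$ when $q\ge N$, which is why the paper splices).
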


\begin{proof}
First by \eqref{g4}, there exists $z \in \CH_{0,{\rm rad}}$ such that
$ \displaystyle \int_{\RN} G(z) \,dx >0$.
(See \cite[Proof of Theorem 2, P. 325]{BL}.)
Since $G(s)=G_1(s)-G_2(s)$, there exists $0<\lambda_0<1$ such that
\begin{equation}\label{eq:2.13}
\lambda_0 \int_{\RN} G_1(z) \,dx
-\int_{\RN} G_2(z) \,dx>0.
\end{equation}
Let $\lambda \in J=[\lambda_0,1]$ and $t>0$. We compute
$I_{\lambda}\left( z( \frac{\cdot}{t} )\right)$.
From \ef{eq:2.13}, one has
\begin{align*}
I_{\lambda}\left( z \left( \frac{\cdot}{t} \right)\right)
&=
\frac{t^{N-p}}{p} \|\nabla z\|_p^p 
+\frac{t^{N-q}}{q} \|\nabla z\|_q^q 
+t^N \int_{\RN} G_2(z)\,dx
-\lambda t^N \int_{\RN} G_1(z) \,dx\\
&\le 
\frac{t^{N-p}}{p} \|\nabla z\|_p^p \,dx
+\frac{t^{N-q}}{q} \|\nabla z\|_q^q \,dx 
-t^N \left(
\lambda_0 \int_{\RN} G_1(z) \,dx
-\int_{\RN} G_2(z) \,dx \right) .
\end{align*}
Hence, we can choose $\tau>1$ so that $I_{\lambda}\left( z( \frac{\cdot}{\tau})
\right)<0$ and consider a function $\gamma:[0,1] \to \CH_{0,{\rm rad}}$ which is defined by 
$$
\gamma(t) = 
\left\{
\begin{array}{ll}
2t z \left( \dfrac{2 \ \cdot}{\tau} \right) & \hbox{if }t\in [0,1/2],
\\
\
\\
z \left( \dfrac{\cdot}{t\tau} \right) & \hbox{if }t\in [1/2,1].
\end{array}
\right.$$
Then it follows that $\gamma \in \Gamma_\l$ and hence the proof is complete.
\end{proof}

\begin{lemma}\label{lem:2.3}
For all $\lambda \in J=[\lambda_0,1]$, the condition \ef{eq:2.12} holds.
\end{lemma}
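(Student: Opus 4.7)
The goal is to verify the mountain pass geometry at the origin: there exist $\rho>0$ and $\alpha>0$, independent of $\lambda\in[\lambda_0,1]$, such that $I_\lambda(u)\ge \alpha$ whenever $\|u\|_{\CH_0}=\rho$. Once this is in place, any $\gamma\in\Gamma_\lambda$ starts at $0$ and ends at a point with $I_\lambda(\gamma(1))<0$; since $I_\lambda\ge 0$ on the small closed ball (see below) the endpoint must lie outside it, and continuity forces $\gamma$ to cross the sphere $\{\|u\|_{\CH_0}=\rho\}$. Taking the maximum along $\gamma$ of $I_\lambda$ therefore dominates $\alpha$, giving $c_\lambda\ge\alpha>0$.

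To get the lower bound on $I_\lambda$, first drop the non-negative contribution $\int_{\RN}G_2(u)\,dx$ using \eqref{eq:2.8}. Next, by \eqref{eq:2.9} and \eqref{eq:2.4} (applied to $r=p^*$ and $r=q^*$), one has
\[
\int_{\RN} G_1(u)\,dx \le C\bigl(\|u\|_{p^*}^{p^*}+\|u\|_{q^*}^{q^*}\bigr) \le C\bigl(\|\nabla u\|_p^{p^*}+\|\nabla u\|_q^{q^*}+\|\nabla u\|_p^{q^*}\bigr),
\]
with constants independent of $\lambda$. Writing $a=\|\nabla u\|_p$ and $b=\|\nabla u\|_q$, and using $\lambda\le 1$, we then obtain
\[
I_\lambda(u)\ \ge\ \frac{a^p}{p}+\frac{b^q}{q}-C\bigl(a^{p^*}+a^{q^*}+b^{q^*}\bigr).
\]

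Now specialize to $a+b=\|u\|_{\CH_0}=\rho\le 1$, so that $a,b\in[0,\rho]$. Since $p<p^*$, $p<q^*$ and $q<q^*$, each higher-order term is of the form $a^p\cdot a^{s-p}$ or $b^q\cdot b^{q^*-q}$ with positive excess exponents, so can be bounded by $\rho^{p^*-p}a^p$, $\rho^{q^*-p}a^p$, and $\rho^{q^*-q}b^q$ respectively. Choosing $\rho$ so small that $Cp(\rho^{p^*-p}+\rho^{q^*-p})\le 1/2$ and $Cq\rho^{q^*-q}\le 1/2$, we deduce
\[
I_\lambda(u)\ \ge\ \frac{a^p}{2p}+\frac{b^q}{2q},
\]
which in particular is non-negative on the whole ball $\{\|u\|_{\CH_0}\le\rho\}$. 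On the sphere $a+b=\rho$, at least one of $a,b$ exceeds $\rho/2$, so
\[
I_\lambda(u)\ \ge\ \min\!\left\{\frac{(\rho/2)^p}{2p},\ \frac{(\rho/2)^q}{2q}\right\}=:\alpha>0,
\]
uniformly in $\lambda\in[\lambda_0,1]$. Combined with the crossing argument above, this yields $c_\lambda\ge\alpha>0$.

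The only delicate point is handling the two scaling regimes simultaneously: the $p$-Laplace and $q$-Laplace energies scale with different powers in $a$ and $b$, and a naive use of the $\CH_0$-norm alone would not close the inequality. The main trick is therefore to keep $\|\nabla u\|_p$ and $\|\nabla u\|_q$ separated throughout and exploit \eqref{eq:2.4}, which was tailored to mix these two quantities appropriately across the whole range $r\in[p^*,q^*]$ and in particular in all sub-cases $q\lessgtr N$ permitted by \eqref{g3}.
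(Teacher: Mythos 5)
Your proposal is correct and follows essentially the same route as the paper: drop the non-negative $G_2$ term, bound $\int_{\RN}G_1(u)\,dx$ via \eqref{eq:2.9} and \eqref{eq:2.4} with $r=p^*$ and $r=q^*$, absorb the higher-order powers of $\|\nabla u\|_p$ and $\|\nabla u\|_q$ into the leading terms for $\|u\|_{\CH_0}=\rho$ small, and conclude with the crossing argument. The only (harmless) cosmetic difference is at the very end: the paper reduces both terms to a common power $q$ using $\|\nabla u\|_p^p\ge\|\nabla u\|_p^q$ for $\|\nabla u\|_p<1$, whereas you observe that one of $\|\nabla u\|_p,\|\nabla u\|_q$ is at least $\rho/2$ and take a minimum.
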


\begin{proof}
For any $u\in \CH_{0,{\rm rad}}$ and $\lambda \in J$, 
we have from \ef{eq:2.8} and \ef{eq:2.9} that
$$
I_{\lambda}(u) \ge \frac{1}{p} \| \nabla u\|_p^p
+\frac{1}{q}\| \nabla u\|_q^q \ad{ -C \|u \|_{p^*}^{p^*} -C \| u\|_{q^*}^{q^*} }.$$
Thus by \ad{ applying \ef{eq:2.4} with $r=p^*$ and $r=q^*$ respectively}, one gets
\begin{align*}
I_{\lambda}(u) &\ge 
\frac{1}{p} \| \nabla u\|_p^p +\frac{1}{q} \| \nabla u\|_q^q
-C \big( \| \nabla u\|_p^{p^*} + \| \nabla u \|_q^{q^*} \big)
-C( \| \nabla u\|_p^{p^*} + \| \nabla u\|_p^{q^*} + \| \nabla u\|_q^{q^*} \big) \\
&\ge \frac{1}{p} \| \nabla u\|_p^p
\Big(1-C\| \nabla u\|_p^{p^*-p}-C \| \nabla u\|_p^{q^*-p}\Big)
+\frac{1}{q} \| \nabla u\|_q^q \Big( 1-C\| \nabla u\|_q^{q^*-q}\Big).
\end{align*}
Let $u \in \CH_{0,{\rm rad}}$ be such that $\| u\|_{\CH_0}=\| \nabla u\|_p+\| \nabla u\|_q=\rho<1$.
Since \ad{$q^* > p^*>p$} and $q^*>q$, if $\rho>0$ is sufficiently small, 
it follows that
$$
\ad{1-C\| \nabla u\|_p^{p^*-p}-C \| \nabla u\|_p^{q^*-p}\ge \frac{1}{2}}, \quad
1-C\| \nabla u\|_q^{q^*-q} \ge \frac{1}{2}.$$
Then from $p<q$ and $\| \nabla u\|_p \le \| u\|_{\CH_0} <1$, we obtain
$$
I_{\lambda}(u) \ge \frac{1}{2p} \| \nabla u\|_p^p
+\frac{1}{2q}\| \nabla u \|_q^q
\ge \frac{1}{2p} \| \nabla u\|_p^q
+\frac{1}{2q} \| \nabla u\|_q^q
\ge C \| u\|_{\CH_0}^q.$$
Thus there exists $\delta>0$ such that 
$I_\l(u)\ge \delta$ for all $u\in \CH_{0,{\rm rad}}$ with $\|u\|_{\CH_0}\le\rho$.
\\
Now we fix $\lambda \in J$ and $\gamma \in \Gamma_\lambda$. 
Since $\gamma(0)=0 \ne \gamma(1)$ and $I_{\lambda}(\gamma(1)) < 0$,
it follows that $ \| \gamma(1) \|_{\CH_0} > \rho$.
By continuity, we deduce that there exists $t_{\gamma} \in (0,1)$ such that
$\| \gamma(t_{\gamma})\| =\rho$.
Thus for any $\lambda \in J$, we obtain
$$
\alpha \le \inf_{\gamma \in \Gamma} I_{\lambda}(\gamma(t_{\lambda}))
\le c_{\lambda}.$$
This completes the proof.
\end{proof}


By Lemmas \ref{lem:2.2} and \ref{lem:2.3}, we can apply 
Proposition \ref{prop:2.1} to obtain a bounded Palais-Smale
sequence $\{ u_n^{\lambda}\} \subset \CH_{0,{\rm rad}}$ of $I_{\lambda}$ for almost every $\l\in J$, that is,
$$
I_{\lambda}(u_n^{\lambda}) \to c_{\lambda}, \ 
I_{\lambda}'(u_n^{\lambda}) \to 0 \ \hbox{and} \ 
 \{u_n^{\lambda}\} \ \hbox{is bounded in }\CH_0
$$
Hence, passing to a subsequence, there exists $u_{\lambda} \in \CH_{0,{\rm rad}}$ such that
\begin{align}
&u_n^{\lambda} \rightharpoonup u_{\lambda} \ \hbox{in} \ \CH_0,
\ \hbox{ as} \ n \to +\infty \nonumber
\\
\label{eq:2.14}
&u_n^{\lambda}(x) \to u_{\lambda}(x) \ 
\hbox{a.e.} \ x\in \RN, \ \hbox{ as} \ n \to +\infty.
\end{align}

\begin{lemma}\label{lem:2.5}
The weak limit $u_{\lambda}$ satisfies
$$
u_{\lambda} \ne 0, \ I_{\lambda}'(u_{\lambda})=0 \ \hbox{and} \
I_{\lambda}(u_{\lambda}) \le c_{\lambda}.$$
\end{lemma}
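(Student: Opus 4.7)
The plan is to establish, in order, strong convergence of $u_n^\lambda$ in appropriate intermediate Lebesgue spaces, convergence of the nonlinear integrals associated with $g_1$ and $g_2$, strong convergence of the gradients in $L^p$ and $L^q$ (this is the main obstacle), and finally to combine these to deduce the three claimed properties of $u_\lambda$.

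First I would invoke a Strauss-type compactness of the radial subspace: passing to a subsequence one may assume $u_n^\lambda \to u_\lambda$ strongly in $L^r(\RN)$ for every $r \in (p^*, q^*)$ and almost everywhere. Combining this with the decay \eqref{eq:2.7} of $g_1$ at $0$ and at $\infty$, a splitting of the integration region into $\{u_n^\lambda \leq \delta\}$, $\{u_n^\lambda \geq M\}$ and $\{\delta < u_n^\lambda < M\}$---where the extreme contributions are small by \eqref{eq:2.7} together with the uniform $L^{p^*}$- and $L^{q^*}$-bounds following from \eqref{immersione}, and the middle part is controlled by strong convergence in some intermediate $L^{r_0}$---yields
\[
\int_{\RN} G_1(u_n^\lambda)\,dx \to \int_{\RN} G_1(u_\lambda)\,dx, \qquad \int_{\RN} g_1(u_n^\lambda)\varphi\,dx \to \int_{\RN} g_1(u_\lambda)\varphi\,dx
\]
for $\varphi \in \CH_{0,{\rm rad}}$. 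The analogous convergences involving $g_2$ are simpler, since after the truncation performed in Section \ref{se:vs} the function $g_2$ is bounded with support in $[0, s_0]$, so dominated convergence applies once the radial decay is used to control the tails.

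The main obstacle is to deduce $I_\lambda'(u_\lambda) = 0$ in the presence of the quasilinear operators. I would upgrade the weak convergence of $\nabla u_n^\lambda$ to strong $L^p$- and $L^q$-convergence: testing $I_\lambda'(u_n^\lambda) \to 0$ against the admissible radial function $u_n^\lambda - u_\lambda$, using the convergence of the nonlinear integrals together with $\int |\nabla u_\lambda|^{p-2}\nabla u_\lambda \cdot \nabla(u_n^\lambda - u_\lambda)\,dx \to 0$ (by weak convergence of $\nabla u_n^\lambda$ against a fixed $L^{p'}$-function) and similarly for $q$, one is left with
\[
\int_{\RN} \bigl(|\nabla u_n^\lambda|^{p-2}\nabla u_n^\lambda - |\nabla u_\lambda|^{p-2}\nabla u_\lambda\bigr) \cdot \nabla(u_n^\lambda - u_\lambda)\,dx \to 0,
\]
and its $q$-analogue. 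The classical strict monotonicity inequality for the $p$-Laplacian then forces $\nabla u_n^\lambda \to \nabla u_\lambda$ almost everywhere, and a uniform-integrability argument promotes this to strong convergence in $L^p$ and $L^q$. Passing to the limit in $\langle I_\lambda'(u_n^\lambda), \varphi\rangle$ for $\varphi \in C^\infty_{0,{\rm rad}}(\RN)$ is then routine, and the principle of symmetric criticality extends the identity to all $\varphi \in \CH_0$.

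Finally, for the non-triviality $u_\lambda \not\equiv 0$ I would argue by contradiction: if $u_\lambda = 0$, then $\int g_1(u_n^\lambda) u_n^\lambda\,dx \to 0$ by the same splitting argument, and combined with $\langle I_\lambda'(u_n^\lambda), u_n^\lambda\rangle \to 0$ and $g_2 \geq 0$ this forces $\|\nabla u_n^\lambda\|_p, \|\nabla u_n^\lambda\|_q \to 0$ and therefore $I_\lambda(u_n^\lambda) \to 0$, contradicting $c_\lambda > 0$ from Lemma \ref{lem:2.3}. The inequality $I_\lambda(u_\lambda) \leq c_\lambda$ then follows by combining weak lower semicontinuity of the $L^p$- and $L^q$-norms of the gradient, Fatou's lemma applied to the non-negative integrand $G_2(u_n^\lambda)$ (cf.\ \eqref{eq:2.8}), and the already proved convergence of $\int G_1(u_n^\lambda)\,dx$.
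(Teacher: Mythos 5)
Your proof follows the same skeleton as the paper's: convergence of the nonlinear integrals via radial compactness, identification of $u_\lambda$ as a critical point, non-triviality by contradiction from $\langle I_\lambda'(u_n^\lambda),u_n^\lambda\rangle\to 0$ together with $g_2\ge 0$, and the energy inequality from Fatou's lemma plus weak lower semicontinuity of the gradient norms --- so in substance it is the paper's argument. The differences are local. For the integral convergences the paper invokes the Strauss-type compactness lemma (Lemma \ref{lem:a.2}) with $Q(s)=|s|^{p^*}+|s|^{q^*}$, using the uniform radial decay from Lemma \ref{lem:a.1}, rather than redoing the three-region splitting by hand; the two are equivalent, but your claim that $g_2$ is ``bounded with support in $[0,s_0]$'' is not justified when $s_0=+\infty$ (then $g_2=g_-$ may be unbounded at infinity), so for $g_2$ you should argue as the paper does, against compactly supported test functions. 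The genuine added value of your write-up is the treatment of $I_\lambda'(u_\lambda)=0$: the paper simply asserts that it follows from \eqref{eq:2.17}, $I_\lambda'(u_n^\lambda)\to 0$ and weak convergence, silently using that the weak $L^{p'}$- and $L^{q'}$-limits of $|\nabla u_n^\lambda|^{p-2}\nabla u_n^\lambda$ and $|\nabla u_n^\lambda|^{q-2}\nabla u_n^\lambda$ are the expected ones, whereas you supply the standard $(S_+)$/monotonicity argument (a.e.\ convergence of gradients) that actually justifies this. If you carry that route out in full, the one step needing care is the term $\int_{\RN}g_2(u_n^\lambda)(u_n^\lambda-u_\lambda)\,dx$, which has no sign and for which no growth control on $g_2$ is assumed; this is the only place where your extra step requires an argument not already contained in the convergences you established.
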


\begin{proof}
First we claim that 
\begin{equation}\label{eq:2.15}
\int_{\RN} G_1(u_n^{\lambda}) \,dx
\to \int_{\RN} G_1(u_{\lambda}) \,dx,
\end{equation}
\begin{equation}\label{eq:2.16}
\int_{\RN} g_1(u_n^{\lambda})u_n^{\lambda} \,dx
\to \int_{\RN} g_1(u_{\lambda})u_{\lambda} \,dx,
\end{equation}
and, for any $\varphi \in C_0^{\infty}(\RN)$ and for $i=1,2$,
\begin{equation}\label{eq:2.17}
\int_{\RN} g_i(u_n^{\lambda}) \varphi \,dx
\to \int_{\RN} g_i(u_{\lambda}) \varphi \,dx.
\end{equation}
To this end, we apply the compactness lemma due to Strauss. (See Lemma \ref{lem:a.2} below.)
\\
Let $Q(s)=|s|^{p^*}+|s|^{q^*}$.
Then from \ef{eq:2.7}, it follows that
$\frac{G_1(s)}{Q(s)} \to 0$ as $s \to 0$ and $s \to \infty$.
Moreover from \ef{eq:2.14}, we also have
$G_1( u_n^{\lambda}(x)) \to G_1(u_{\lambda}(x))$ a.e. $x\in \RN$ and, by \eqref{immersione}
$$
\sup_{n \in \N} \int_{\RN} Q(u_n^{\lambda}) \,dx
\le C \sup_{n \in \N} 
\left( \| u_n^{\lambda} \|_{\CH_0}^{p^*}
+\| u_n^{\lambda} \|_{\CH_0}^{q^*} \right)
<+\infty.$$
Finally since $u_n^{\lambda} \in \CH_{0, {\rm rad}}\subset D^{1,p}_{\rm rad}(\RN)$, 
we have, by the radial lemma (see Lemma \ref{lem:a.1} below), that
$u_n^{\lambda}(x) \to 0$ as $|x| \to +\infty$ uniformly in $n\in \N$.
Thus all assumptions in Lemma \ref{lem:a.2} are satisfied.
Then it follows that $G_1(u_n^{\lambda}) \to G_1(u_{\lambda})$ in $L^1(\RN)$
and hence \ef{eq:2.15} holds.
Arguing similarly, one can show that \ef{eq:2.16} and \ef{eq:2.17}.
\\
Now from \ef{eq:2.17}, $I'_{\lambda}(u_n^{\lambda}) \to 0$ and
$u_n^{\lambda} \rightharpoonup u_{\lambda}$ in $\CH_0$,
one has $I_{\lambda}'(u_{\lambda})=0$.
To prove $u_{\lambda} \ne 0$, we suppose by contradiction that
$u_{\lambda}=0$. Since $I_{\lambda}'(u_n^{\lambda}) \to 0$, 
we have by the boundedness of $\{ u_n^{\lambda}\}$ in $\CH_0$ that
$$
\|\nabla u_n^{\lambda}\|_p^p
+ \| \nabla u_n^{\lambda} \|_q^q+\int_{\RN} g_2(u_n^{\lambda})u_n^{\lambda} \,dx
=\lambda \int_{\RN} g_1(u_n^{\lambda}) u_n^{\lambda} \,dx
+o(1).$$
Then from \eqref{eq:2.6} and \ef{eq:2.16}, 
it follows that $\| u_n^{\lambda} \|_{\CH_0} \to 0$, which contradicts 
$I_{\lambda}(u_n^{\lambda}) \to c_{\lambda}>0$.
\\
Finally we show that $I_{\lambda}(u_{\lambda}) \le c_{\lambda}$.
By \ef{eq:2.14} and Fatou's lemma, one has
$$
\int_{\RN} G_2(u_{\lambda}) \,dx
\le \liminf_{n \to +\infty} \int_{\RN} G_2(u_n^{\lambda}) \,dx.$$
By the weakly lower semi-continuity of $\| \cdot \|_{\CH_0}$ and
from \ef{eq:2.15}, we obtain $I_{\lambda}(u_{\lambda}) \le c_{\lambda}$.
This completes the proof.
\end{proof}

Lemma \ref{lem:2.5} implies that, 
for almost every $\lambda\in J$, $u_{\lambda}$ is a non-trivial solution of \ef{eq:3.1}. 
In order to obtain a non-trivial solution of the original problem \ef{eq:1.1}, 
we next consider a sequence of such $\{ \lambda_n\}$ such that
$\lambda_n \nearrow 1$ as $n \to +\infty$.
Then by Proposition \ref{prop:2.1} and Lemma \ref{lem:2.5}, 
there exists $\{v_n\} \subset \CH_{0,{\rm rad}} \setminus \{ 0\}$ such that
\begin{equation}\label{eq:2.18}
I_{\lambda_n}'(v_n)=0, \ 
I_{\lambda_n}(v_n) \le c_{\lambda_n}.
\end{equation}
Then we claim the following lemma.

\begin{lemma}\label{lem:2.6}
The sequence $\{ v_n\} $ is bounded in $\CH_0$.
\end{lemma}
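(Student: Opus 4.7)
The plan is to combine the natural Pohozaev identity for the quasilinear equation satisfied by $v_n$ with the mountain pass estimate $I_{\lambda_n}(v_n) \leq c_{\lambda_n}$, so that the potential terms $\int G_1(v_n)\,dx$ and $\int G_2(v_n)\,dx$ cancel out and only the gradient norms survive.

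First I would write the equation $-\Delta_p v_n-\Delta_q v_n+g_2(v_n)=\lambda_n g_1(v_n)$ and, using the regularity $v_n\in C^{1,\sigma}_{\mathrm{loc}}$ together with the radial decay of $v_n$ at infinity (via Lemma~\ref{lem:a.1}), invoke the Pohozaev-type identity
\begin{equation*}
\frac{N-p}{p}\|\nabla v_n\|_p^p+\frac{N-q}{q}\|\nabla v_n\|_q^q+N\int_{\RN}G_2(v_n)\,dx=\lambda_n N\int_{\RN}G_1(v_n)\,dx.
\end{equation*}
Dividing by $N$ and substituting this into the expression of $I_{\lambda_n}(v_n)$, a direct computation gives
\begin{equation*}
I_{\lambda_n}(v_n)=\Bigl(\frac{1}{p}-\frac{N-p}{pN}\Bigr)\|\nabla v_n\|_p^p+\Bigl(\frac{1}{q}-\frac{N-q}{qN}\Bigr)\|\nabla v_n\|_q^q=\frac{1}{N}\bigl(\|\nabla v_n\|_p^p+\|\nabla v_n\|_q^q\bigr).
\end{equation*}

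Next I would observe that the map $\lambda\mapsto c_\lambda$ is non-increasing on $J$: since $B(u)=\int G_1(u)\,dx\geq 0$, for $\lambda_1\leq\lambda_2$ one has $I_{\lambda_2}\leq I_{\lambda_1}$ pointwise, hence $\Gamma_{\lambda_1}\subset\Gamma_{\lambda_2}$ and $\max_{[0,1]}I_{\lambda_2}(\gamma(\cdot))\leq\max_{[0,1]}I_{\lambda_1}(\gamma(\cdot))$ along any common path, which yields $c_{\lambda_2}\leq c_{\lambda_1}$. In particular, for $\lambda_n\in[\lambda_0,1]$,
\begin{equation*}
\frac{1}{N}\bigl(\|\nabla v_n\|_p^p+\|\nabla v_n\|_q^q\bigr)=I_{\lambda_n}(v_n)\leq c_{\lambda_n}\leq c_{\lambda_0},
\end{equation*}
which immediately yields a uniform bound on $\|\nabla v_n\|_p$ and $\|\nabla v_n\|_q$, and therefore on $\|v_n\|_{\CH_0}$.

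The main technical obstacle is the rigorous justification of the Pohozaev identity for the $(p,q)$-Laplacian on the whole $\RN$: one cannot just differentiate $I_{\lambda_n}(v_n(\cdot/t))$ at $t=1$, because the natural test function $x\cdot\nabla v_n$ is not obviously admissible. The standard way around this is to multiply the equation by a truncation $\varphi_R(x)\,x\cdot\nabla v_n$, integrate by parts exploiting the $C^{1,\sigma}_{\mathrm{loc}}$ regularity, and then pass to the limit $R\to\infty$; the decay of $v_n$ from the radial lemma together with the bounds \eqref{eq:2.5}, \eqref{eq:2.6} on $g_1,g_2$ ensures that all the boundary and remainder terms vanish. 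Once this identity is in hand, the remainder of the argument is purely algebraic.
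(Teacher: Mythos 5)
Your proposal is correct and follows essentially the same route as the paper: the Pohozaev identity (justified via the $C^{1,\sigma}_{\rm loc}$ regularity and a cut-off/truncation argument, exactly as the paper does by invoking \cite{DMS} with $h_k(x)=H(x/k)x$) is combined with $I_{\lambda_n}(v_n)\le c_{\lambda_n}$ to cancel the potential terms and leave $\frac1N(\|\nabla v_n\|_p^p+\|\nabla v_n\|_q^q)\le c_{\lambda_n}\le c_{\lambda_0}$, the last inequality coming from the monotonicity of $\lambda\mapsto c_\lambda$, which you correctly justify from the non-negativity of $B$.
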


\begin{proof}
First we observe from $I_{\lambda_n}'(v_n)=0$ that $v_n$ satisfies
$$
-\Delta_p v_n- \Delta_q v_n +g_2(v_n)
-\lambda_n g_1(v_n)=0 \quad \hbox{in} \ \RN,$$
in the weak sense. 
Next we claim that $v_n$ satisfies the following Pohozaev identity:
\begin{equation}\label{eq:2.19}
\frac{N-p}{p} \|\nabla v_n \|_p^p +\frac{N-q}{q} \|\nabla v_n \|_q^q
+N \int_{\RN} G_2(v_n) \,dx
-N\lambda_n \int_{\RN} G_1(v_n) \,dx=0.
\end{equation}
To this aim, we argue as in \cite{PS}.
First adapting the the Moser type iteration as in \cite{HL}, one can show that
$v_n\in C_{\rm loc}^{1,\sigma}(\RN)$ for some $\sigma \in (0,1)$.\footnote{First we notice that the argument used in the proof of Theorem 2 in \cite{HL}
only requires that $v_n \in W^{1,q}_{\rm loc}(\RN)$
because they adopt a cut-off function to obtain desired estimates.
Thus if $q<N$, we can apply Theorems 1-2 in \cite{HL} directly.
If $q>N$, we already have $v_n \in L^{\infty}(\RN)$ so that we can use Theorem 1 in \cite{HL}.
Finally when $q=N$, we have to show that $v_n\in L^{\infty}(\RN)$ first.
But checking the proof of Theorem 2 in \cite{HL} carefully, 
the argument of the proof works even for $q=N$.
Then we can apply Theorem 1 of \cite{HL} to conclude that $v_n \in C_{\rm loc}^{1,\sigma}(\RN)$.}
Next since $v_n \in C^{1,\sigma}_{loc}(\RN)$ and
the function ${\mathcal L}(\xi)=\frac{1}{p} |\xi|^p + \frac{1}{q}|\xi|^q$ 
associated with the differential operator in \ef{eq:1.1} is convex,
we can apply the Pohozaev identity for $C^1$ solutions due to \cite[Lemma 1]{DMS}
by choosing $h(x)=h_k(x)=H(x/k)x \in C_0^1(B_{2k}(0),\R^N)$ for $k \in \N$,
where $H\in C_0^1(\R^N)$ is such that $H(x)=1$ on $|x| \le 1$
and $H(x)=0$ for $|x| \ge 2$.
Letting $k \to +\infty$ and taking into account that 
$|\nabla v_n|^p$, $|\nabla v_n|^q$, $G_1(v_n)$, $G_2(v_n) \in L^1(\RN)$, 
we obtain \ef{eq:2.19} as claimed. 
\\
Now from \ef{eq:2.18}, we have
\begin{align}\label{eq:2.20}
I_{\lambda_n}(v_n)
&=\frac{1}{p} \|\nabla v_n \|_p^p 
+\frac{1}{q} \|\nabla v_n\|_q^q 
+\int_{\RN} G_2(v_n) \,dx 
-\lambda_n \int_{\RN} G_1(v_n) \,dx \le c_{\lambda_n}.
\end{align}
Hence from \ef{eq:2.19}, \ef{eq:2.20} and the monotonicity of
$c_{\lambda}$ with respect to $\lambda$, it follows that
\begin{equation*}
\|\nabla v_n\|_p^p 
+ \|\nabla v_n\|_q^q 
\le N c_{\lambda_n} \le Nc_{\lambda_0}
\end{equation*}
from which we conclude that the assertion holds.
\end{proof}

We can now prove our first main result.

\begin{proof}[Proof of Theorem \ref{thm:1.1}]

By Lemma \ref{lem:2.6}, up to a subsequence, 
we may assume that there exists $v\in \CH_{0, {\rm rad}}$ such that
$v_n \rightharpoonup v$ in $\CH_0$.
Our goal is to show that $v$ is a nontrivial critical point of $I$.
First we prove that $I'(v)=0$. To this aim, we observe from $I_{\lambda_n}'(v_n)=0$ that
$$
I'(v_n)=I_{\lambda_n}'(v_n)+(\lambda_n-1)g_1(v_n)
=(\lambda_n-1)g_1(v_n).$$
Moreover arguing similarly as the proof of \ef{eq:2.17}, one has
$$
\int_{\RN} g_1(v_n) \varphi \,dx
\to \int_{\RN} g_1(v) \varphi \,dx
\ \hbox{for any} \ \varphi \in C_0^{\infty}(\RN).$$
This implies that $(\lambda_n-1)g_1(v_n)=o(1)$ and hence
$\{ v_n \}$ is a Palais-Smale sequence for the functional $I$.
Using the compactness lemma \ref{lem:a.2} again, we can see that $I'(v)=0$.
\\
To conclude the proof, we claim that $v \ne 0$. 
Now from \eqref{eq:2.5}, \eqref{eq:2.6}
and $I_{\lambda_n}'(v_n)=0$, we have
\begin{align} \label{eq:2.21}
\|\nabla v_n\|_p^p + \|\nabla v_n\|_q^q &\le
\|\nabla v_n\|_p^p + \|\nabla v_n\|_q^q
+\int_{\RN} g_2(v_n)v_n \,dx \nonumber \\
&= \lambda_n \int_{\RN} g_1(v_n)v_n \,dx
\ad{ \le C \big( \|v_n\|_{p^*}^{p^*}+\| v_n \|_{q^*}^{q^*} \big)}.
\end{align}
Next we claim that
\begin{equation}\label{eq:2.22}
\liminf_{n \to +\infty} \|  v_n \|_{\CH_0}>0.
\end{equation}
Suppose by contradiction that $v_n \to 0$ in $\CH_0$. Now from \ef{eq:2.4} and \ef{eq:2.21},
one has
$$
\|\nabla v_n\|_p^p+\|\nabla v_n\|_q^q
\le C\left( \ad{ \|\nabla v_n\|_p^{p^*}+\|\nabla v_n\|_p^{q^*}+\| \nabla v_n \|_q^{q^*}} \right).
$$
Since $\| v_n\|_{\CH_0}=\| \nabla v_n\|_p+\| \nabla v_n\|_q \to 0$, 
\ad{$p<p^*<q^*$} and $q<q^*$, we may assume that 
$$
\ad{C(\| \nabla v_n\|_p^{p^*} + \| \nabla v_n\|_p^{q^*})} 
\le \frac{1}{2} \| \nabla v_n\|_p^p,\quad
C\| \nabla v_n \|_q^{q^*} \le \frac{1}{2} \| \nabla v_n \|_q^q,$$
from which we reach a contradiction.

By the compactness lemma \ref{lem:a.2}, one can show that
$$
\int_{\RN} g_1(v_n)v_n \,dx \to \int_{\RN} g_1(v)v \,dx \ \hbox{as} \ n \to +\infty.$$
Then from \eqref{eq:2.21} and \ef{eq:2.22}, we obtain
\begin{equation*}
0< \liminf_{ n \to +\infty} \left(\|\nabla v_n\|_p^p+ \|\nabla v_n\|_q^q \right)
\le \liminf_{n \to +\infty} \lambda_n \int_{\RN} g_1(v_n)v_n \,dx 
= \int_{\RN} g_1(v) v \,dx.
\end{equation*}
This implies that $v \ne 0$ and so we obtain the existence of a nontrivial solution
of \ef{eq:1.1}. 
Applying the the Moser type iteration as in \cite{HL}, 
one has $v \in C^{1,\sigma}_{loc}(\RN)$ for some $\sigma \in (0,1)$.
Then from $g(s) \equiv 0$ for $s\le 0$
and the Harnack inequality due to \cite{T}, it follows that $v>0$ in $\RN$. 
Finally by the radial lemma \ref{lem:a.1}, 
$v \in L^{\infty}(\RN)$ and $v(x) \to 0$ as $|x| \to \infty$.
This completes the proof of Theorem \ref{thm:1.1}.
\end{proof}

We conclude this section by showing the existence of a radial ground state solution of \ef{eq:1.1}.
Let us define by $\CSZ$ the set of the nontrivial radial solutions of \eqref{eq:1.1}, namely
\[
\CSZ=\{u\in \CH_{0,{\rm rad}}\setminus \{0\} \mid I'(u)=0\}.
\]
By Theorem \ref{thm:1.1}, we know that $\CSZ\ne \emptyset$. 
Arguing as in the proof of Theorem \ref{thm:1.1}, we have
\begin{equation}\label{eq:2.23}
\inf_{u\in \CSZ}\|u\|_{\CH_0}>0.
\end{equation}
In a similar argument as in the proof of Lemma \ref{lem:2.6},
any $u\in \CSZ$ satisfies the following Pohozaev identity:
\begin{equation*}
\frac{N-p}{pN} \|\nabla u \|_p^p +\frac{N-q}{qN} \|\nabla u \|_q^q
= \int_{\RN} G(u) \,dx.
\end{equation*}
Thus we infer that
\begin{equation}\label{eq:2.24}
I(u)=\frac{1}{N} \left(\|\nabla u \|_p^p + \|\nabla u \|_q^q \right).
\end{equation}
Combining \eqref{eq:2.23} and \eqref{eq:2.24}, we have that
\begin{equation*}
\sigma=\inf_{u\in \CSZ}I(u)>0.
\end{equation*}

\begin{theorem}\label{thm:2.7}
Assume \eqref{g1}-\ad{\eqref{g4}}. Then \ef{eq:1.1} has a {\em radial ground state solution}, 
namely there exists $\bar u\in \CSZ$ such that 
\[
I(\bar u)=\min_{u\in \CSZ}I(u).
\]
\end{theorem}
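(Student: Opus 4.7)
The plan is to use the direct method on $I$ restricted to $\CSZ$, exploiting the crucial identity \eqref{eq:2.24} which makes $I$ coercive on $\CSZ$ even though $I$ itself is not coercive on $\CH_0$. Let $\{u_n\}\subset \CSZ$ be a minimizing sequence, i.e.\ $I(u_n)\to \sigma$. By \eqref{eq:2.24},
\[
\|\nabla u_n\|_p^p+\|\nabla u_n\|_q^q = N\,I(u_n) \to N\sigma,
\]
so $\{u_n\}$ is bounded in $\CH_{0,\mathrm{rad}}$. Passing to a subsequence, there exists $\bar u\in \CH_{0,\mathrm{rad}}$ such that $u_n\rightharpoonup \bar u$ in $\CH_0$ and $u_n(x)\to\bar u(x)$ a.e.

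Next I would show $\bar u \in \CSZ$. To prove $\bar u \neq 0$, suppose by contradiction that $\bar u=0$. Testing $I'(u_n)=0$ against $u_n$ gives
\[
\|\nabla u_n\|_p^p+\|\nabla u_n\|_q^q+\int_{\RN} g_2(u_n)u_n\,dx = \int_{\RN} g_1(u_n)u_n\,dx.
\]
Using the radial lemma together with the Strauss compactness lemma exactly as in the derivation of \eqref{eq:2.16}, $\int_{\RN} g_1(u_n)u_n\,dx \to \int_{\RN} g_1(\bar u)\bar u\,dx = 0$. Since $g_2\ge 0$ by \eqref{eq:2.6}, this forces $\|u_n\|_{\CH_0}\to 0$, contradicting \eqref{eq:2.23}. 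To prove $I'(\bar u)=0$, I would repeat the argument used at the end of the proof of Theorem \ref{thm:1.1}: the nonlinear terms $\int g_i(u_n)\varphi\,dx$ converge to $\int g_i(\bar u)\varphi\,dx$ for every $\varphi\in C_0^\infty(\RN)$ via the Strauss compactness lemma, and the quasilinear terms pass to the limit by a.e.\ convergence of gradients combined with the standard monotonicity (Minty/$S^+$) argument for the $(p,q)$-Laplacian applied to the solution sequence $\{u_n\}$.

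Once $\bar u\in \CSZ$ is established, the Pohozaev identity again applies to $\bar u$, giving $I(\bar u)=\frac{1}{N}(\|\nabla \bar u\|_p^p+\|\nabla \bar u\|_q^q)$. By weak lower semicontinuity of the $L^p$ and $L^q$ norms of the gradient,
\[
I(\bar u) = \frac{1}{N}\bigl(\|\nabla \bar u\|_p^p+\|\nabla \bar u\|_q^q\bigr)
\le \frac{1}{N}\liminf_{n\to+\infty}\bigl(\|\nabla u_n\|_p^p+\|\nabla u_n\|_q^q\bigr)
=\liminf_{n\to+\infty} I(u_n)=\sigma,
\]
while $I(\bar u)\ge \sigma$ by definition of $\sigma$, so $I(\bar u)=\sigma$, concluding the proof.

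The two delicate points are the proof that $\bar u$ is a weak solution and the proof of $\bar u\neq 0$. The first is technically the more substantial: passing to the limit in the quasilinear operator requires strong-enough convergence of $\nabla u_n$, and this is where the hypothesis that the $u_n$ are themselves solutions (not merely an abstract $(PS)$ sequence) is decisive, since it lets one argue as in Theorem \ref{thm:1.1}. The non-triviality of $\bar u$ is clean provided \eqref{eq:2.23} is available, which is why recording it before the theorem is important.
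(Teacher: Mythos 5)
Your proposal is correct and follows essentially the same route as the paper: the direct method on $\CSZ$, boundedness of the minimizing sequence via the Pohozaev identity \eqref{eq:2.24}, membership of the weak limit in $\CSZ$ by repeating the arguments of Theorem \ref{thm:1.1} (with \eqref{eq:2.23} giving non-triviality), and weak lower semicontinuity of the gradient norms to close the chain of inequalities. The only difference is that you spell out the ``arguing as in Theorem \ref{thm:1.1}'' step (Strauss compactness plus the monotonicity argument for the quasilinear terms) more explicitly than the paper does.
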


\begin{proof}
Let $\{u_n\}\subset \CSZ$ be a minimizing sequence. Since 
\[
I(u_n)=\frac{1}{N} \left(\|\nabla u_n \|_p^p + \|\nabla u_n \|_q^q \right)\to \sigma,
\]
we infer that $\{u_n\}$ is bounded in $\CHZ$. 
Therefore there exists $\bar u\in\CHZ$ such that $u_n \rightharpoonup\bar u$ weakly in $\CHZ$. 
\\
Arguing as in the proof of Theorem \ref{thm:1.1}, 
we have that $\bar u\in \CSZ$ and so we conclude observing that, 
by the weak lower semicontinuity of the norms,
\[
\sigma \le I(\bar u)
=\frac{1}{N} \left(\|\nabla \bar u \|_p^p + \|\nabla \bar u \|_q^q \right)
\le \liminf_{n \to +\infty} \frac{1}{N} \left(\|\nabla u_n \|_p^p + \|\nabla u_n \|_q^q \right)
=\liminf_{n \to +\infty} I(u_n)=\sigma.
\]
This completes the proof.
\end{proof}

\begin{remark}\label{rem:2.8}
In Theorem \ref{thm:2.7}, we could only obtain the existence of a radial ground state solution.
We expect that the existence of ground state solutions 
can be shown without restricting ourselves to the radial class.
For this purpose, we have two possibilities.

One is to characterize the ground state solution as a constraint minimizer of suitable functional.
Then the result on the symmetry of constraint minimizers due to \cite{M}
enables us to conclude that any ground state solution is radially symmetric.
For $0$-th order problem \ef{eq:1.3}, the ground state solution can be characterized as
the following constraint minimizer:
$$
\inf \left\{ \| \nabla u\|_2 \ \Big| \ u\in H^1(\RN), \ \int_{\RN} G(u) \,dx =1 \right\}.
$$
However in order to characterize the ground state solution in this way,
scaling property plays an essential role.
Since scaling argument fails to work in our problem,
we don't know whether the ground state solution of \ef{eq:1.1} can be characterized 
as a constraint minimizer of some suitable functional.

The other possibility is to apply the concentration compactness principle as in \cite{AP2}.
But in order to adopt their argument, 
we also need the characterization of the ground state solution.

It is also worth pointing out that, if the nonlinearity $g(s)$ is 
locally Lipschitz continuous for $s \ge 0$, 
we can apply the symmetry result due to \cite{SZ} for the problem \ef{eq:1.1}, 
to show that any non-negative decaying solution of class $C^1$  is radially symmetric.

Finally if we assume that $g(s)$ is odd as in \cite{BL}, 
we are not able to say that any ground state solution of \ef{eq:1.1} is positive.
This is because generically, the proof of the positivity of ground state solutions
is based on the characterization by constraint minimization,
which is not available for our problem.
\end{remark}

\section{The positive mass case}\label{se:pos}

This section is devoted to the study of \eqref{eq:1.1}, in the positive mass case, 
namely when $g$ satisfies \eqref{g2'} instead of \eqref{g2}.
In this case, we work on the function space $\CH$ which is
given by $\CH= \overline{ C_0^{\infty}(\RN)}^{\ \| \, \cdot \, \|_\CH}$, where
$$
\| u\|_{\CH}:= \|\nabla u\|_p +\|u\|_\ell+\|\nabla u\|_q.
$$
For all $N \ge 3$, it follows that $\CH \hookrightarrow \CH_0$ 
and $\CH \hookrightarrow L^{\ell}(\RN)$.
Thus by Theorem \ref{th:embedding} and since $\ell \in [p,p^*)$, we have
\begin{equation}\label{eq:3.1}
\CH\hookrightarrow L^r(\RN) \ \hbox{ for all }r\in [\ell,q^*].
\end{equation}

For $u\in \CH$, we define the 
functional $I:\CH \to \R$ associated with \ef{eq:1.1} by
$$
I(u)=\frac{1}{p} \|\nabla u\|_p^p 
+\frac{1}{q} \|\nabla u\|_q^q
-\int_{\RN} G(u) \,dx.$$
By hypotheses \eqref{g1}, \eqref{g2'}, \eqref{g3} and from \ef{eq:3.1}, 
we can see that $I$ is well-defined 
and of class $C^1$ on $\CH$ and its critical points are solutions of \eqref{eq:1.1}.

As done in Section \ref{se:0}, we truncate and decompose the nonlinear term $g$. 
Let 
$$
s_0:= \min\{ s\in [\zeta,+\infty) \mid \ g(s)=0 \}$$
and $s_0=+\infty$ if $g(s) \ne 0$ for all $s \ge \zeta$. 
We define $\tilde{g}:\R_+ \to \R$ by
$$
\tilde{g}(s)=\left\{
\begin{array}{ll}
g(s) & \hbox{on} \ [0,s_0],\\
0 & \hbox{on} \ (s_0,+\infty).
\end{array}
\right.$$
Also in this case, by the maximum principle, any positive solutions of \ef{eq:1.1} with $\tilde{g}$ 
satisfy the original problem \ef{eq:1.1} and thus we may replace $g$ by $\tilde{g}$ in \ef{eq:1.1}. 
Hereafter we write $g=\tilde{g}$ for simplicity. 

Next for $s \ge 0$, we set
$$
g_1(s):= (g(s)+m_\ell s ^{\ell-1})_+ \ \ \hbox{and} \ \ 
g_2(s):=g_1(s)-g(s).$$
Then one has $g_1(s)\ge 0$, $g_2(s) \ge 0$ for $s \ge 0$ and 
\begin{equation}\label{eq:3.2}
\lim_{s \to 0} \frac{g_1(s)}{|s|^{\ell-1}}=0,
\end{equation}
\begin{equation}\label{eq:3.3}
\lim_{s \to +\infty} \frac{g_1(s)}{s^{q^*-1}}=0,
\end{equation}
\begin{equation}\label{eq:3.4}
g_2(s) \ge m_\ell s^{\ell-1} \ \hbox{for all} \ s\ge 0.
\end{equation}
From \ef{eq:3.2}-\ef{eq:3.4}, for any $0<\varepsilon<1$, 
there exists $C_{\varepsilon}>0$ such that
\begin{equation}\label{eq:3.5}
g_1(s) \le C_{\varepsilon} |s|^{q^*-1} +\varepsilon g_2(s) \ 
\hbox{for} \ s \ge 0.
\end{equation}
Moreover we put 
$G_i(t)=\int_0^t g_i(s) \,ds$ for $i=1,2$. Then from 
\ef{eq:3.4} and \ef{eq:3.5}, we also have
\begin{equation}\label{eq:3.6}
G_2(s) \ge \frac{m_\ell}{\ell}|s|^\ell \ \hbox{for all} \ s\in \R,
\end{equation}
\begin{equation}\label{eq:3.7}
G_1(s) \le\frac{C_{\varepsilon}}{q^*} |s|^{q^*}+\varepsilon G_2(s)
\ \hbox{for all} \ s\in \R.
\end{equation}

We follow the same strategy as in the previous section and so 
we consider the following auxiliary problem:
\begin{equation}\label{eq:3.8}
-\Delta_p u- \Delta_q u+g_2(u) =\lambda g_1(u) \ \hbox{in} \ \RN
\end{equation}
for $\lambda$ close to $1$. We define the functional 
$I_{\lambda}:\CH \to \R$ by
$$
I_{\lambda}(u)=\frac{1}{p} \|\nabla u\|_p^p
+\frac{1}{q} \|\nabla u\|_q^q 
+\int_{\RN} G_2(u) \,dx
-\lambda \int_{\RN} G_1(u) \,dx.$$
In order to find a non-trivial critical point of $I_{\lambda}$, 
we apply the Monotonicity trick (see Proposition \ref{prop:2.1}), 
with $X=\CH_{{\rm rad}}$, where
\[
\CH_{{\rm rad}}=\{u\in \CH\mid u \hbox{ is radially symmetric }\},
\]  
and 
\begin{align*}
A(u)&=\frac{1}{p} \|\nabla u\|_p^p 
+\frac{1}{q} \|\nabla u\|_q^q 
+\int_{\RN} G_2(u) \,dx,
\\
B(u) &=\int_{\RN} G_1(u) \,dx.
\end{align*}

Now arguing as in Lemma \ref{lem:2.2}, we infer the following.
\begin{lemma}\label{lem:3.1}
There exists $\lambda_0 \in (0,1)$ such that 
the set $\Gamma_\l$ defined in \ef{eq:2.11} is non-empty 
for every $\lambda \in J=[\lambda_0,1]$.
\end{lemma}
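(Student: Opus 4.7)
The plan is to adapt the scaling argument of Lemma \ref{lem:2.2} to the positive-mass setting, checking only that the extra $L^\ell$-component of the norm on $\CH$ (absent from $\CH_0$) does not obstruct any step. Since $I_\lambda$ itself does not depend explicitly on $\|u\|_\ell$, the $\ell$-norm can enter only through continuity of paths in $\CH$; this will be handled by explicit scaling identities.

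First, I would apply \eqref{g4} together with the classical Berestycki--Lions construction \cite[Proof of Theorem 2, p.\,325]{BL} to produce a radial function $z \in C_0^\infty(\RN)$ with $\int_{\RN} G(z)\,dx > 0$. The compact support of $z$ immediately places it in $\CH_{\rm rad}$ and makes all of $\|\nabla z\|_p$, $\|z\|_\ell$, $\|\nabla z\|_q$, $\int G_1(z)\,dx$, $\int G_2(z)\,dx$ finite. Writing $G = G_1 - G_2$, I would then pick $\lambda_0 \in (0,1)$ sufficiently close to $1$ so that
\[
\lambda_0 \int_{\RN} G_1(z)\,dx - \int_{\RN} G_2(z)\,dx > 0.
\]

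Second, for $\lambda \in J = [\lambda_0, 1]$ and $t > 0$, a change of variables yields
\[
I_\lambda\bigl(z(\cdot/t)\bigr) = \frac{t^{N-p}}{p}\|\nabla z\|_p^p + \frac{t^{N-q}}{q}\|\nabla z\|_q^q - t^N \Bigl(\lambda \int G_1(z)\,dx - \int G_2(z)\,dx\Bigr),
\]
which is bounded above by the same expression with $\lambda$ replaced by $\lambda_0$. Since $p, q > 0$, the terms $t^{N-p}$ and $t^{N-q}$ grow strictly more slowly than $t^N$ as $t \to +\infty$ (and even decay if $q \ge N$), so there exists $\tau > 1$, independent of $\lambda \in J$, such that $I_\lambda(z(\cdot/\tau)) < 0$. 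Reusing verbatim the two-piece path from Lemma \ref{lem:2.2},
\[
\gamma(t) = \begin{cases} 2t\, z(2\cdot/\tau), & t \in [0, 1/2], \\ z(\cdot/(t\tau)), & t \in [1/2, 1], \end{cases}
\]
joins $0$ to $z(\cdot/\tau)$ in $\CH_{\rm rad}$ and lies in $\Gamma_\lambda$.

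The only step that is genuinely new compared with Lemma \ref{lem:2.2} is the continuity of $\gamma$ in the stronger norm of $\CH$. This follows from the explicit scaling identities $\|\nabla z(\cdot/s)\|_p^p = s^{N-p}\|\nabla z\|_p^p$, $\|z(\cdot/s)\|_\ell^\ell = s^N \|z\|_\ell^\ell$, and $\|\nabla z(\cdot/s)\|_q^q = s^{N-q}\|\nabla z\|_q^q$, each a continuous function of $s \in (0, \infty)$. In short, the lemma is a mechanical adaptation of Lemma \ref{lem:2.2}; the main --- and rather mild --- point is verifying that the $L^\ell$-norm behaves well under the scaling, which it does.
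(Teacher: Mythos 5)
Your proposal is correct and follows exactly the route the paper intends: the paper proves Lemma \ref{lem:3.1} simply by the remark ``arguing as in Lemma \ref{lem:2.2}'', and your write-up carries out precisely that adaptation, with the only genuinely new point being the good behaviour of the $L^\ell$-component of the $\CH$-norm under the dilation $t\mapsto z(\cdot/t)$ (for which, strictly speaking, continuity of the path in $L^\ell$ follows from dominated convergence for the compactly supported $z$, not merely from continuity of $s\mapsto s^N\|z\|_\ell^\ell$). This is a minor imprecision, not a gap.
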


Next we establish the following lemma by modifying the proof of Lemma \ref{lem:2.3}.
\begin{lemma}\label{lem:3.2}
For all $\lambda \in J=[\lambda_0,1]$, the condition \ef{eq:2.12} holds.
\end{lemma}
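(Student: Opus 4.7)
The plan is to mirror the template of Lemma \ref{lem:2.3}: exhibit $\rho>0$ and $\alpha>0$, both independent of $\lambda\in J$, such that $I_\lambda(u)\geq \alpha$ whenever $u\in \CH_{\rm rad}$ satisfies $\|u\|_{\CH}=\rho$, and then deduce $c_\lambda\geq \alpha>0$ from continuity of any mountain-pass path. The structural advantage of the positive-mass case is that $G_2$ now controls $\|u\|_\ell^\ell$ from below via \eqref{eq:3.6}, supplying exactly the $L^\ell$ component that has been built into $\|u\|_{\CH}$; this is what makes the estimate close.

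For the bound on the sphere, I would fix $\varepsilon=1/2$ in \eqref{eq:3.7} to absorb the $G_2$-part of $G_1$ into the positive terms of $I_\lambda$. This gives, for every $\lambda\in[\lambda_0,1]$,
\[
I_\lambda(u)\ \geq\ \frac{1}{p}\|\nabla u\|_p^p+\frac{1}{q}\|\nabla u\|_q^q+\frac{m_\ell}{2\ell}\|u\|_\ell^\ell-\frac{C_{1/2}}{q^*}\|u\|_{q^*}^{q^*}.
\]
The negative term is at most $C\|u\|_{\CH}^{q^*}$ by the embedding \eqref{eq:3.1}. For the positive sum, an elementary observation gives
\[
\|\nabla u\|_p^p+\|u\|_\ell^\ell+\|\nabla u\|_q^q\ \geq\ C\rho^{s},\qquad s:=\max\{p,\ell,q\},
\]
whenever $\|u\|_{\CH}=\rho<1$: at least one of the three summands $\|\nabla u\|_p,\ \|u\|_\ell,\ \|\nabla u\|_q$ must be $\geq\rho/3$, and since each exponent $p,\ell,q$ is $\leq s$ while the summand is $<1$, that summand raised to its own exponent is still bounded below by $(\rho/3)^{s}$.

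Putting these two estimates together yields $I_\lambda(u)\geq C'\rho^{s}-C''\rho^{q^*}$ uniformly in $\lambda\in J$. Since $\ell<p^*<q^*$, $p<q^*$, and $q<q^*$ (the latter by construction of $q^*$ in \eqref{g3}), we have $s<q^*$, and hence $I_\lambda(u)\geq \alpha>0$ on the sphere $\|u\|_{\CH}=\rho$ for all sufficiently small $\rho$. The conclusion $c_\lambda\geq \alpha$ is then routine: for any $\gamma\in\Gamma_\lambda$ one has $\gamma(0)=0$ and $I_\lambda(\gamma(1))<0$, so $\|\gamma(1)\|_{\CH}>\rho$; by continuity of $t\mapsto \|\gamma(t)\|_{\CH}$ there exists $t_\gamma\in(0,1)$ with $\|\gamma(t_\gamma)\|_{\CH}=\rho$, giving $\max_{t\in[0,1]}I_\lambda(\gamma(t))\geq \alpha$. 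The one subtle point, and the only place where the proof differs meaningfully from Lemma \ref{lem:2.3}, is balancing the three different exponents $p,\ell,q$ against the critical exponent $q^*$; the positive-mass hypothesis \eqref{g2'} is precisely what makes this balance work, by producing the missing $\|u\|_\ell^\ell$ term in the lower bound.
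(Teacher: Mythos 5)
Your proof is correct and follows essentially the same route as the paper: you use \eqref{eq:3.6}--\eqref{eq:3.7} to reduce to the bound $I_\lambda(u)\ge \frac1p\|\nabla u\|_p^p+\frac1q\|\nabla u\|_q^q+\frac{m_\ell(1-\varepsilon)}{\ell}\|u\|_\ell^\ell-\frac{C_\varepsilon}{q^*}\|u\|_{q^*}^{q^*}$ and then balance exponents on a small sphere, exactly as in the paper's adaptation of Lemma \ref{lem:2.3}. The only (harmless) difference is in absorbing the negative term: the paper controls $\|u\|_{q^*}^{q^*}$ via \eqref{eq:2.4} and folds it multiplicatively into the gradient terms, while you use the cruder embedding $\|u\|_{q^*}\le C\|u\|_{\CH}$ together with a pigeonhole on the three components of $\|u\|_{\CH}$, arriving at the same exponent $\max\{\ell,q\}<q^*$.
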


\begin{proof}
For any $u\in \CH_{{\rm rad}}$ and $\lambda \in J$, 
we have from \eqref{eq:3.6} and \eqref{eq:3.7}, and later by \eqref{eq:2.4}, that
\begin{align*}
I_{\lambda}(u) &\ge \frac{1}{p}\|\nabla u\|_p^p 
+\frac{1}{q} \|\nabla u\|_q^q 
+\frac{m_\ell(1-\varepsilon)}{\ell}\|u\|_\ell^\ell 
-\frac{C_\varepsilon}{q^*} \|u\|_{q^*}^{q^*} \\
&\ge \frac{1}{p} \| \nabla u\|_p^p
\Big(1-C\| \nabla u\|_p^{p^*-p}-C \| \nabla u\|_p^{q^*-p}\Big)
+\frac{1}{q} \| \nabla u\|_q^q \Big( 1-C\| \nabla u\|_q^{q^*-q}\Big)
+\frac{m_\ell(1-\varepsilon)}{\ell}\|u\|_\ell^\ell.
\end{align*}
Let $u\in \CH_{{\rm rad}}$ such that 
$\|u\|_{\CH}=\| \nabla u\|_p +\| u\|_{\ell}+\| \nabla u\|_q=\rho < 1$. 
Since \ad{$q^* > p^*>p$} and $q^*>q$, if $\rho>0$ is sufficiently small, 
it follows that
$$
\ad{1-C\| \nabla u\|_p^{p^*-p}-C \| \nabla u\|_p^{q^*-p}\ge \frac{1}{2}}, \quad
1-C\| \nabla u\|_q^{q^*-q} \ge \frac{1}{2}.$$
Then from $p \le \ell$ and $p<q$, we get
$$
I_{\lambda}(u) \ge C \| u\|_{\CH}^{\bar{\ell}},$$
where $\bar{\ell}=\max \{ \ell,q\}$.
Therefore there exists $\delta>0$ such that $I_\l(u)\ge \delta$
for all $u\in \CH_{{\rm rad}}$ with $\|u\|_{\CH}\le\rho$.
The conclusion follows as in Lemma \ref{lem:2.3}.
\end{proof}

By Lemmas \ref{lem:3.1} and \ref{lem:3.2}, we can apply 
Proposition \ref{prop:2.1} to obtain a bounded Palais-Smale
sequence $\{ u_n^{\lambda}\} \subset \CH_{{\rm rad}}$ of $I_{\lambda}$, for almost every $\lambda\in J$, that is,
$$
I_{\lambda}(u_n^{\lambda}) \to c_{\lambda}, \ 
I_{\lambda}'(u_n^{\lambda}) \to 0 \ \hbox{and} \ 
\{ u_n^{\lambda}\} \ \hbox{is bounded in }\CH$$
Hence, passing to a subsequence, there exists $u_{\lambda} \in \CH_{{\rm rad}}$ such that
\begin{align}
&u_n^{\lambda} \rightharpoonup u_{\lambda}\ \hbox{ in }\ \CH,
\ \hbox{ as } n \to +\infty, \nonumber \\ 
&u_n^{\lambda}(x) \to u_{\lambda}(x)  \ 
\hbox{ a.e. } \ x\in \RN, \ \hbox{ as} \ n \to +\infty. \nonumber
\end{align}

\begin{lemma}\label{lem:3.3}
The weak limit $u_{\lambda}$ satisfies
$$
u_{\lambda} \ne 0, \quad I_{\lambda}'(u_{\lambda})=0 \quad \hbox{and} \quad
I_{\lambda}(u_{\lambda}) \le c_{\lambda}.$$
\end{lemma}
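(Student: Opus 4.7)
The proof follows the three-part structure of Lemma \ref{lem:2.5}, with the modifications dictated by the positive mass framework. First I would establish the convergence of the lower-order integrals via the Strauss compactness lemma (Lemma \ref{lem:a.2}). The growth estimates \ef{eq:3.2} and \ef{eq:3.3} ensure that $g_1(s)$, $g_1(s)s$ and $G_1(s)$ are all $o(|s|^\ell + |s|^{q^*})$ as $s \to 0^+$ and $s \to +\infty$, so Lemma \ref{lem:a.2} applies with comparison function $Q(s) = |s|^\ell + |s|^{q^*}$; the uniform bound $\sup_n \int Q(u_n^\lambda)\,dx < +\infty$ comes from boundedness in $\CH$ together with \ef{eq:3.1}, and the required uniform decay at infinity from the radial lemma. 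This gives $\int G_1(u_n^\lambda) \to \int G_1(u_\lambda)$, $\int g_1(u_n^\lambda) u_n^\lambda \to \int g_1(u_\lambda) u_\lambda$, and $\int g_i(u_n^\lambda)\varphi \to \int g_i(u_\lambda)\varphi$ for $\varphi \in C_0^\infty(\RN)$ and $i=1,2$. To obtain $I_\lambda'(u_\lambda) = 0$, I would then combine these convergences with the standard $(S_+)$ argument for the $(p,q)$-Laplacian applied to $\langle I_\lambda'(u_n^\lambda), u_n^\lambda - u_\lambda\rangle \to 0$ to get strong convergence of the gradients along a subsequence, after which passing to the limit in the weak equation is routine.

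For the nontriviality $u_\lambda \ne 0$, I argue by contradiction. If $u_\lambda = 0$, then testing $I_\lambda'(u_n^\lambda) \to 0$ against $u_n^\lambda$ and using the previous paragraph yields
\[
\|\nabla u_n^\lambda\|_p^p + \|\nabla u_n^\lambda\|_q^q + \int_{\RN} g_2(u_n^\lambda) u_n^\lambda\,dx = o(1).
\]
All three summands are nonnegative, so each tends to $0$ individually. The positive mass hypothesis enters precisely here: by \ef{eq:3.4}, $m_\ell \|(u_n^\lambda)_+\|_\ell^\ell \le \int g_2(u_n^\lambda) u_n^\lambda\,dx \to 0$, hence $\|(u_n^\lambda)_+\|_\ell \to 0$. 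Combined with $\|\nabla u_n^\lambda\|_p \to 0$ and $\|\nabla u_n^\lambda\|_q \to 0$, the embedding estimates from Theorem \ref{th:embedding} also give $\|u_n^\lambda\|_{q^*} \to 0$. Since $G_i(s) = 0$ for $s \le 0$ and $G_i(s) \le C(|s|^\ell + |s|^{q^*})$ for $s \ge 0$, one concludes $\int G_i(u_n^\lambda)\,dx \to 0$, whence $I_\lambda(u_n^\lambda) \to 0$, contradicting $c_\lambda > 0$.

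The energy inequality $I_\lambda(u_\lambda) \le c_\lambda$ then follows from the weak lower semicontinuity of $\|\nabla \cdot\|_p^p$ and $\|\nabla \cdot\|_q^q$, Fatou's lemma applied to the nonnegative integrand $G_2(u_n^\lambda)$ using the a.e. convergence $u_n^\lambda \to u_\lambda$, and the convergence of $\int G_1(u_n^\lambda)$ established above. The main obstacle is the nontriviality step: in the zero mass proof the identity forces $\|u_n^\lambda\|_{\CH_0} \to 0$ directly from vanishing of the gradient norms, and continuity of $I_\lambda$ on $\CH_0$ closes the argument; here the extra $L^\ell$ component of $\|\cdot\|_\CH$ requires \ef{eq:3.4} to turn the decay of $\int g_2(u_n^\lambda) u_n^\lambda\,dx$ into decay of $\|(u_n^\lambda)_+\|_\ell$, which in turn propagates to $\int G_i(u_n^\lambda) \to 0$ via the embedding bounds.
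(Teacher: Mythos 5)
Your argument is essentially the paper's: the authors prove Lemma \ref{lem:3.3} by simply repeating the proof of Lemma \ref{lem:2.5} with the comparison function changed to $Q(s)=|s|^{\ell}+|s|^{q^*}$ in Strauss's compactness lemma, which is exactly what you do, and your extra use of \eqref{eq:3.4} to control $\|(u_n^{\lambda})_+\|_{\ell}$ in the nontriviality step is the natural (and correct) way to carry out the ``rest of the proof'' that the paper leaves implicit. The proposal is correct and follows the same route, only with more detail than the paper chooses to record.
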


\begin{proof}
The proof is almost same as that of Lemma \ref{lem:2.5}.
The only difference is the choice of $Q(s)$ to apply the Strauss's compactness lemma.
Indeed in the positive mass case, putting $Q(s)=|s|^{\ell}+|s|^{q^*}$, 
one has from \ef{eq:3.2} and \ef{eq:3.3} that
$\frac{G_1(s)}{Q(s)} \to 0$ as $s \to 0$ and $s \to \infty$.
The rest of the proof can be done in a similar way as Lemma \ref{lem:2.5}.
\end{proof}

Lemma \ref{lem:3.3} implies that, for almost every $\lambda\in J$, $u_{\lambda}$ is a non-trivial 
solution of \ef{eq:3.8}. 
In order to obtain a non-trivial solution of the
original problem \ef{eq:1.1}, we next consider a sequence
$\{ \lambda_n\}$ such that
$\lambda_n \nearrow 1$ as $n \to +\infty$.
Then by Proposition \ref{prop:2.1} and Lemma \ref{lem:3.3}, 
there exists $\{v_n\} \subset \CH_{{\rm rad}} \setminus \{ 0\}$ such that
\begin{equation}\label{eq:3.9}
I_{\lambda_n}'(v_n)=0, \ 
I_{\lambda_n}(v_n) \le c_{\lambda_n}.
\end{equation}
Then we claim the following lemma.

\begin{lemma}\label{lem:3.4}
The sequence $\{ v_n\} $ is bounded in $\CH$.
\end{lemma}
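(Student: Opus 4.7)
The plan is to parallel the proof of Lemma \ref{lem:2.6} to control the $p$- and $q$-gradient norms of $v_n$, and then to exploit the positive mass structure encoded in \eqref{eq:3.6}--\eqref{eq:3.7} together with the embedding $\CH_0 \hookrightarrow L^{q^*}$ from Theorem \ref{th:embedding} to recover the additional $L^\ell$ bound required for the $\CH$-norm.

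First I would upgrade the regularity of each $v_n$ via the Moser-type iteration of \cite{HL} to obtain $v_n \in C_{\rm loc}^{1,\sigma}(\RN)$, and then apply the Pohozaev identity for $C^1$ solutions from \cite{DMS} with a cutoff $h_k(x)=H(x/k)x$ exactly as in Lemma \ref{lem:2.6}, passing to the limit using $|\nabla v_n|^p$, $|\nabla v_n|^q$, $G_1(v_n)$, $G_2(v_n) \in L^1(\RN)$, to obtain
\begin{equation*}
\frac{N-p}{p} \|\nabla v_n \|_p^p +\frac{N-q}{q} \|\nabla v_n \|_q^q + N \int_{\RN} G_2(v_n) \,dx - N\lambda_n \int_{\RN} G_1(v_n) \,dx = 0.
\end{equation*}
Combining this with $I_{\lambda_n}(v_n) \le c_{\lambda_n}$ from \eqref{eq:3.9}, by computing $N \cdot I_{\lambda_n}(v_n)$ minus the Pohozaev identity, the integral terms cancel and I obtain
\[
\|\nabla v_n\|_p^p + \|\nabla v_n\|_q^q \le N c_{\lambda_n} \le N c_{\lambda_0},
\]
using the monotonicity of $\lambda \mapsto c_\lambda$. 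In particular, both $\|\nabla v_n\|_p$ and $\|\nabla v_n\|_q$ are bounded.

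For the $L^\ell$ bound, I would use the Pohozaev identity once more: the non-negativity of $\|\nabla v_n\|_p^p$ and $\|\nabla v_n\|_q^q$ gives $\int_{\RN} G_2(v_n)\,dx \le \lambda_n \int_{\RN} G_1(v_n)\,dx$. Applying \eqref{eq:3.7} with $\varepsilon = 1/2$ and absorbing the $G_2$ contribution on the right-hand side yields
\[
\int_{\RN} G_2(v_n)\,dx \le C \|v_n\|_{q^*}^{q^*}.
\]
Since the gradient norms are bounded, $\|v_n\|_{\CH_0}$ is bounded, and the embedding $\CH_0 \hookrightarrow L^{q^*}$ from Theorem \ref{th:embedding} (with the choice of $q^*$ from \eqref{g3}) shows that $\|v_n\|_{q^*}$ is bounded. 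Hence $\int_{\RN} G_2(v_n)\,dx$ is bounded, and the lower bound \eqref{eq:3.6} gives
\[
\frac{m_\ell}{\ell}\|v_n\|_\ell^\ell \le \int_{\RN} G_2(v_n)\,dx \le C,
\]
completing the $L^\ell$ estimate. Together with the gradient bounds, this is exactly boundedness of $\{v_n\}$ in $\CH$.

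The main technical point, as in Lemma \ref{lem:2.6}, is the validity of the Pohozaev identity, which requires the $C^{1,\sigma}_{\rm loc}$ regularity (with the borderline case $q=N$ handled by the modified iteration indicated in the footnote of Lemma \ref{lem:2.6}) together with the cutoff-and-pass-to-the-limit argument. The genuinely new ingredient compared with the zero mass case is the use of the interplay between \eqref{eq:3.7} and the $L^{q^*}$ embedding to convert the Pohozaev-based bound on $\int G_2(v_n)$ into the $L^\ell$ estimate that was not needed in Lemma \ref{lem:2.6}.
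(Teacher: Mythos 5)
Your first half coincides with the paper's proof: the $C^{1,\sigma}_{\rm loc}$ regularity, the Pohozaev identity \eqref{eq:3.10}, and its combination with $I_{\lambda_n}(v_n)\le c_{\lambda_n}$ and the monotonicity of $c_\lambda$ to obtain the bound \eqref{eq:3.13} on the gradient norms. For the additional $L^\ell$ estimate you take a genuinely different route: the paper uses the Nehari-type identity $I_{\lambda_n}'(v_n)[v_n]=0$ (equation \eqref{eq:3.12}) together with the \emph{pointwise} bounds \eqref{eq:3.4}--\eqref{eq:3.5} on $g_1,g_2$, whereas you reuse the Pohozaev identity together with the \emph{integrated} bounds \eqref{eq:3.6}--\eqref{eq:3.7} on $G_1,G_2$. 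Your route is equally viable and requires one fewer identity, but as written it contains a small oversight: you discard the gradient terms in \eqref{eq:3.10} on the grounds of non-negativity, yet the coefficient $\frac{N-q}{q}$ is negative when $q>N$, a case the paper explicitly allows (indeed advertises). The repair is immediate: move $\frac{q-N}{q}\|\nabla v_n\|_q^q$ to the right-hand side and bound it by the constant $\frac{q-N}{q}\,Nc_{\lambda_0}$ already produced in the first step, after which the absorption of $\varepsilon\int_{\RN} G_2(v_n)\,dx$ (finite since $I_{\lambda_n}$ is well defined on $\CH$), the embedding of Theorem \ref{th:embedding}, and the lower bound \eqref{eq:3.6} give $\|v_n\|_\ell\le C$ exactly as you describe.
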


\begin{proof}
The conclusion follows by the same argument as in the proof of Lemma \ref{lem:2.6}.
But in the positive mass case, we further need a bound for the $L^\ell$-norm of $\{ v_n \}$.
\\
Now since $I_{\lambda_n}'(v_n)=0$, it follows that $v_n$ satisfies
$$
-\Delta_p v_n- \Delta_q v_n +g_2(v_n)
-\lambda_n g_1(v_n)=0 \quad \hbox{in} \ \RN,$$
in the weak sense. 
Then as in the zero mass case, one can show that
the following Pohozaev identity holds:
\begin{equation}\label{eq:3.10}
\frac{N-p}{p} \|\nabla v_n \|_p^p +\frac{N-q}{q} \|\nabla v_n \|_q^q
+N \int_{\RN} G_2(v_n) \,dx
-N\lambda_n \int_{\RN} G_1(v_n) \,dx=0.
\end{equation}
Moreover from \ef{eq:3.9}, we also have
\begin{align}\label{eq:3.11}
I_{\lambda_n}(v_n)
&= \frac{1}{p} \|\nabla v_n \|_p^p 
+\frac{1}{q} \|\nabla v_n\|_q^q 
+\int_{\RN} G_2(v_n) \,dx 
-\lambda_n \int_{\RN} G_1(v_n) \,dx \le c_{\lambda_n}, \\
\label{eq:3.12}
I_{\lambda_n}'(v_n)[v_n]
&= \|\nabla v_n\|_p^p 
+ \|\nabla v_n\|_q^q
+\int_{\RN} g_2(v_n)v_n \,dx
-\lambda_n \int_{\RN} g_1(v_n)v_n \,dx =0.
\end{align}
From \ef{eq:3.10}, \ef{eq:3.11} and the monotonicity of
$c_{\lambda}$ with respect to $\lambda$, it follows that
\begin{equation*}
\|\nabla v_n\|_p^p 
+ \|\nabla v_n\|_q^q 
\le N c_{\lambda_n} \le Nc_{\lambda_0},
\end{equation*}
and hence
\begin{equation}\label{eq:3.13}
\|v_n\|_{\CH_0} \le C.
\end{equation}
To conclude, we have to show that $\{v_n\}$ is bounded in $L^{\ell}(\RN)$. 
By \ef{eq:3.4}, \ef{eq:3.5} and \ef{eq:3.12}, one has
\begin{align*}
0&=\|\nabla v_n\|_p^p
+ \| \nabla v_n\|_q^q 
+\int_{\RN} g_2(v_n)v_n \,dx
-\lambda_n \int_{\RN} g_1(v_n) v_n \,dx \\
&\ge (1-\varepsilon)m_\ell \| v_n \|^\ell_\ell
-C_{\varepsilon} \| v_n\|^{q^*}_{q^*}.
\end{align*}
By Theorem \ref{th:embedding} and \ef{eq:3.13}, we get
\begin{equation*}
\|v_n\|^\ell_\ell
\le C\| v_n\|^{q^*}_{q^*}
\le C \| v_n\|_{\CH_0}^{q^*}
\le C.
\end{equation*}
This, together with \eqref{eq:3.13}, completes the proof.
\end{proof}

\begin{proof}[Proof of Theorem \ref{thm:1.2}]
By Lemma \ref{lem:3.4}, up to a subsequence, 
we may assume that there exists $v\in \CH_{{\rm rad}}$ such that $v_n \rightharpoonup v$ in $\CH$. 
Arguing as in the proof of Theorem \ref{thm:1.1}, 
we can see that $v$ is a nontrivial critical point of $I$. 
Moreover by the regularity theory and the Harnack inequality, it follows that $v>0$ in $\RN$.
\end{proof}

To introduce the existence of a ground state solution, 
let us define $\CS$ the set of the nontrivial radial solutions of \eqref{eq:1.1}, namely
\[
\CS=\{u\in \CH_{{\rm rad}}\setminus \{0\} \mid I'(u)=0\}.
\]
Arguing as Theorem \ref{thm:2.7}, one can show the following result.


\begin{theorem}\label{thm:3.5}
Assume \eqref{g1}, \eqref{g2'}, \eqref{g3} and \eqref{g4}. 
Then \ef{eq:1.1} has a {\em radial ground state solution}, 
namely there exists $\bar u\in \CS$ such that 
\[
I(\bar u)=\min_{u\in \CS}I(u).
\]
\end{theorem}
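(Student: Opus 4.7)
The plan is to follow the blueprint of Theorem \ref{thm:2.7}, but with two genuine modifications forced by the positive mass setting: the natural energy space is now $\CH$, so any minimizing sequence must be controlled also in the $L^\ell$-norm, and the lower bound of the type \eqref{eq:2.23} has to be re-established using the decomposition $g = g_1 - g_2$ specific to this section.

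First I would note that, by Theorem \ref{thm:1.2}, $\CS \ne \emptyset$. Mimicking Lemma \ref{lem:2.6}, any $u \in \CS$ satisfies a Pohozaev identity, which together with the Nehari identity $I'(u)[u]=0$ gives
\[
I(u) = \frac{1}{N}\bigl(\|\nabla u\|_p^p + \|\nabla u\|_q^q\bigr).
\]
A quantitative lower bound analogous to \eqref{eq:2.23} is obtained by testing $I'(u)=0$ with $u$, writing $g=g_1-g_2$, applying \eqref{eq:3.4}, \eqref{eq:3.5} with $\varepsilon$ small, and using the embedding \eqref{eq:3.1} to control $\|u\|_{q^*}^{q^*}$ by $\|u\|_\CH$. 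Together with the Pohozaev representation of $I(u)$, this gives
\[
\sigma := \inf_{u\in\CS} I(u) > 0.
\]

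Next I would take a minimizing sequence $\{u_n\} \subset \CS$ with $I(u_n) \to \sigma$. The Pohozaev representation immediately bounds $\{u_n\}$ in $\CH_0$. To upgrade this to a bound in $\CH$, I would argue as in Lemma \ref{lem:3.4}: from $I'(u_n)[u_n]=0$ combined with \eqref{eq:3.4} and \eqref{eq:3.5} one obtains
\[
(1-\varepsilon)m_\ell \|u_n\|_\ell^\ell \le C_\varepsilon \|u_n\|_{q^*}^{q^*} \le C\|u_n\|_{\CH_0}^{q^*} \le C,
\]
providing the missing $L^\ell$ bound. Hence, up to a subsequence, $u_n \rightharpoonup \bar u$ in $\CH$, pointwise a.e.\ in $\RN$, and radial.

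The key step is to show that $\bar u \in \CS$. Since each $u_n$ is a weak solution of \eqref{eq:1.1}, passing to the limit in $I'(u_n)\varphi = 0$ for $\varphi \in C_0^\infty(\RN)$ relies on the compactness lemma of Strauss (Lemma \ref{lem:a.2}) applied with $Q(s) = |s|^\ell + |s|^{q^*}$, exactly as in the proof of Lemma \ref{lem:3.3}; this yields $I'(\bar u)=0$. To show $\bar u \ne 0$ I would use the lower bound: from $I'(u_n)[u_n]=0$ one has
\[
\|\nabla u_n\|_p^p + \|\nabla u_n\|_q^q \le \int_{\RN} g_1(u_n)u_n\,dx,
\]
the left-hand side is bounded below by a positive constant (by the lower bound on $\|u_n\|_\CH$ and again the Pohozaev representation), and Strauss' lemma gives $\int g_1(u_n)u_n \to \int g_1(\bar u)\bar u$, so $\bar u\ne 0$. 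Finally the conclusion $I(\bar u)=\sigma$ follows from
\[
\sigma \le I(\bar u) = \frac{1}{N}\bigl(\|\nabla \bar u\|_p^p + \|\nabla \bar u\|_q^q\bigr) \le \liminf_{n\to+\infty}\frac{1}{N}\bigl(\|\nabla u_n\|_p^p + \|\nabla u_n\|_q^q\bigr) = \sigma,
\]
by weak lower semicontinuity of the norms.

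The main technical obstacle is the control of the $L^\ell$-norm of the minimizing sequence, which requires combining the information from $I'(u_n)[u_n]=0$ with the decomposition into $g_1,g_2$; once this bound is in hand, the rest is an application of the compactness lemma of Strauss together with weak lower semicontinuity, in complete analogy with Theorem \ref{thm:2.7}.
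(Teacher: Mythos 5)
Your proposal is correct and follows exactly the route the paper intends: the paper proves Theorem \ref{thm:3.5} simply by the remark ``arguing as Theorem \ref{thm:2.7}'', and your write-up is precisely that argument with the positive-mass adjustments (the extra $L^\ell$-bound via \eqref{eq:3.4}--\eqref{eq:3.5} as in Lemma \ref{lem:3.4}, and the Strauss lemma with $Q(s)=|s|^\ell+|s|^{q^*}$ as in Lemma \ref{lem:3.3}) filled in. No gaps.
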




\section{$k$-th order approximated problem}\label{se:appr}

In this section, we consider the approximation, at $k$-th order, of the Born-Infeld  equation \eqref{eq:1.3}, namely we deal with the following problem:
\begin{equation} \label{eq:4.1}
\begin{cases}
-\Delta u-\Delta_4u 
\cdots - \Delta_{2k} u =h(u) & \hbox{in} \ \RN,
\\
u(x)\to 0 & \hbox{as }|x|\to +\infty,
\end{cases}
\end{equation}
where $N \ge 3$, $k \in \N$ with $k \ge 2$.
Here we normalized the coefficients $\frac{(2k-3)!!}{(k-1)!}\beta^{k-1}$ in \ef{eq:1.4}
because they are not essential for the existence of solutions.
Moreover since we are interested in higher order approximation, we assume that
\begin{equation} \label{eq:4.2}
k \ge \max \left\{ \frac{N}{2}, \frac{N}{N-2} \right\}.
\end{equation}
We impose the following assumptions on $h$.
\begin{enumerate} [label=(h\arabic*),ref=h\arabic*]
\item \label{h1} $h\in C(\R_+,\R)$ and $h(s) \equiv 0$ for $s\le 0$.
\item \label{h2} Either (i) or (ii) is fulfilled:
\begin{enumerate}
\item[(i)] for all $\ell\in [2, \frac{2N}{N-2}]$, it holds
$$
-\infty
\le \limsup_{s \to 0^+} \frac{h(s)}{s^{\ell-1}} \le 0;$$
\item[(ii)] there exist $\ell\in [2, \frac{2N}{N-2})$ and $m_{\ell}>0$ such that 
$$
-\infty<\liminf_{s\to 0^+} \frac{h(s)}{s^{\ell-1}} \le
\limsup_{s \to 0^+} \frac{h(s)}{s^{\ell-1}} =-m_\ell.$$
\end{enumerate}
\item \label{h3} There exists $\ell^*>2k\ge \frac{2N}{N-2}$ such that
\begin{equation*} 
\displaystyle -\infty \le \limsup_{s \to +\infty}
\frac{h(s)}{s^{{\ell}^*-1}} \le 0.
\end{equation*}
\item \label{h4} There exists $\zeta>0$ such that $H(\zeta)=\int_0^{\zeta} h(s) \,ds >0$.
\end{enumerate}
In this setting, we have the following result.

\begin{theorem} \label{thm:4.1}
Assume \ef{eq:4.2}, \eqref{h1}-\ef{h4}.
Then problem \ef{eq:4.1} has a solution which is positive and radially symmetric and belongs to  
$C_{\rm loc}^{1,\sigma}(\RN)\cap L^{\infty}(\RN)$ class,
for some $\sigma \in (0,1)$. 
Furthermore, there exists a radial ground state solution of \ef{eq:4.1}.
\end{theorem}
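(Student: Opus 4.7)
The plan is to adapt the variational approach of Sections~\ref{se:0}--\ref{se:pos} to the sum of $k$ quasilinear operators, treating the zero-mass alternative \eqref{h2}(i) and the positive-mass alternative \eqref{h2}(ii) in parallel. I would work on the radial subspace of
\[
\mathcal{H}^{(k)}_0 := \overline{C_0^\infty(\mathbb{R}^N)}^{\|\cdot\|_0}, \qquad \|u\|_0 := \sum_{j=1}^k \|\nabla u\|_{2j},
\]
(or on a variant $\mathcal{H}^{(k)}$ enriched with the $L^\ell$-norm in case \eqref{h2}(ii)). The first technical point is to extend Theorem~\ref{th:embedding}: because \eqref{eq:4.2} forces $2k \ge N$, the top-order gradient yields an embedding into $L^\infty$ when $2k > N$, or into every finite $L^r$ when $2k = N$ (via the iterated Sobolev trick used in Theorem~\ref{th:embedding} for $q=N$), while the $L^2$-norm of the gradient provides the embedding into $L^{2^*}$. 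Interpolation then gives $\mathcal{H}^{(k)}_0 \hookrightarrow L^r(\mathbb{R}^N)$ for every $r \in [2^*, \ell^*]$, together with a quantitative bound analogous to \eqref{eq:2.4}.

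Next, after truncating $h$ above $s_0$ and decomposing $\tilde h = h_1 - h_2$ exactly as in Sections~\ref{se:0}/\ref{se:pos}, I set
\[
I_\lambda(u) = \sum_{j=1}^k \frac{1}{2j}\|\nabla u\|_{2j}^{2j} + \int_{\mathbb{R}^N} H_2(u)\,dx - \lambda \int_{\mathbb{R}^N} H_1(u)\,dx
\]
and verify the mountain-pass geometry via the analogues of Lemmas~\ref{lem:2.2}--\ref{lem:2.3} (or \ref{lem:3.1}--\ref{lem:3.2}): the intermediate gradient terms $\|\nabla u\|_{2j}^{2j}$ for $2<2j<2k$ are non-negative and can simply be dropped in the lower estimate near the origin; for the path to infinity the scaling $z(\cdot/t)$ still works since $t^{N-2j}\ll t^N$ as $t\to+\infty$ for every $j\ge 1$. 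Applying Proposition~\ref{prop:2.1} along a sequence $\lambda_n \nearrow 1$ yields nontrivial radial critical points $v_n$ of $I_{\lambda_n}$ with $I_{\lambda_n}(v_n)\le c_{\lambda_n}$.

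The decisive step is to show that $\{v_n\}$ is bounded in $\mathcal{H}^{(k)}_0$. Arguing as in Lemma~\ref{lem:2.6}, the Moser iteration of \cite{HL} gives $v_n\in C^{1,\sigma}_{\mathrm{loc}}(\mathbb{R}^N)$, and the convexity of the Lagrangian $\mathcal{L}(\xi) = \sum_{j=1}^k \frac{1}{2j}|\xi|^{2j}$ lets me invoke \cite[Lemma~1]{DMS} to obtain the Pohozaev identity
\[
\sum_{j=1}^k \frac{N-2j}{2j}\|\nabla v_n\|_{2j}^{2j} + N\int_{\mathbb{R}^N} H_2(v_n)\,dx - N\lambda_n\int_{\mathbb{R}^N} H_1(v_n)\,dx = 0.
\]
Subtracting this from $N\cdot I_{\lambda_n}(v_n) \le N c_{\lambda_n}$ collapses the nonlinear integrals and produces $\sum_{j=1}^k \|\nabla v_n\|_{2j}^{2j} \le N c_{\lambda_0}$; in case \eqref{h2}(ii), an additional $L^\ell$-bound is recovered by testing $I_{\lambda_n}'(v_n) = 0$ against $v_n$, as in Lemma~\ref{lem:3.4}. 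Passing to the limit $\lambda_n \nearrow 1$ and applying Strauss's compactness lemma (with $Q(s)=|s|^{2^*}+|s|^{\ell^*}$ or $|s|^\ell+|s|^{\ell^*}$) delivers a nontrivial weak solution $v$, whose positivity and decay follow from the Harnack inequality of \cite{T} and the radial lemma, as in the proofs of Theorems~\ref{thm:1.1}/\ref{thm:1.2}. The radial ground state is obtained by minimizing $I$ over the set of nontrivial radial critical points, exactly as in Theorems~\ref{thm:2.7}/\ref{thm:3.5}.

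The hard part will be ensuring that the $C^{1,\sigma}_{\mathrm{loc}}$ regularity of \cite{HL} and the Pohozaev identity of \cite{DMS} extend cleanly to the sum of $k$ quasilinear operators: the Moser argument has to absorb all $k$ nonlinearities simultaneously, and the convexity and growth hypotheses required by \cite{DMS} must be verified for $\mathcal{L}(\xi)$. The borderline case $2k=N$ in the embedding is a secondary obstacle, but can be handled by iterating the Sobolev inequality on $|u|^{m-1}u$ exactly as in the $q=N$ case of Theorem~\ref{th:embedding}. Once these technical extensions are in place, the structural arguments of Sections~\ref{se:0}--\ref{se:pos} transfer essentially verbatim.
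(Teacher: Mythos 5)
Your proposal is correct and follows essentially the same route as the paper, which likewise reduces Theorem \ref{thm:4.1} to the machinery of Sections \ref{se:0}--\ref{se:pos} (truncation and decomposition of $h$, monotonicity trick, Pohozaev identity via \cite{DMS}, Strauss compactness, and minimization over nontrivial radial critical points for the ground state). The only cosmetic difference is that the paper works with the norm $\|\nabla u\|_2+\|\nabla u\|_{2k}$ rather than the full sum $\sum_{j=1}^k\|\nabla u\|_{2j}$, but these are equivalent since the intermediate gradient norms are controlled by interpolation between the $L^2$ and $L^{2k}$ norms of $\nabla u$.
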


As a special case, let us study the problem:
\begin{equation} \label{eq:4.3}
\begin{cases}
\displaystyle -\Delta u - \beta \Delta_4 u -\frac{3}{2}\beta^2 \Delta_6 u
\cdots -\frac{(2k-3)!!}{(k-1)!}\beta^{k-1} \Delta_{2k} u = |u|^{\alpha-1}u
& \hbox{in} \ \RN,
\\[4mm]
u(x)\to 0 & \hbox{as }|x|\to +\infty,
\end{cases}
\end{equation}
for $N \ge 3$, $\alpha> \frac{2N}{N-2}$ and $\beta>0$.
Under the assumption \ef{eq:4.2}, we choose $\ell^*> \max \{ \alpha, 2k \}$ arbitrarily and, by Theorem \ref{thm:4.1}, we obtain the following result.

\begin{corollary} \label{cor:4.2}
Assume \ef{eq:4.2} and let $\alpha>\frac{2N}{N-2}$ and $\beta>0$ be arbitrarily given.
Then the problem \ef{eq:4.3} has a positive radial solution 
as well as a radial ground state solution.
\end{corollary}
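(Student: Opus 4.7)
The plan is to derive Corollary \ref{cor:4.2} as a direct application of Theorem \ref{thm:4.1}: I verify that the pure power nonlinearity $h(s)=|s|^{\alpha-1}s$ fulfils \eqref{h1}--\eqref{h4} for the suggested choice of $\ell^*$, and observe that the positive constants $\frac{(2j-3)!!}{(j-1)!}\beta^{j-1}$ attached to each $\Delta_{2j}$ in \eqref{eq:4.3} do not affect the variational argument underlying Theorem \ref{thm:4.1}.

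Following the truncation convention of Sections \ref{se:0} and \ref{se:pos}, I first replace $h$ by $\tilde h(s)=(s_+)^{\alpha}$, which is legitimate because any positive solution of the modified equation solves \eqref{eq:4.3} by the maximum principle. Then \eqref{h1} is immediate, and \eqref{h4} holds with any $\zeta>0$ since $H(\zeta)=\zeta^{\alpha+1}/(\alpha+1)>0$. For \eqref{h2} I check alternative (i): the strict inequality $\alpha>\frac{2N}{N-2}\ge \ell$ for every $\ell\in[2,\frac{2N}{N-2}]$ gives $\tilde h(s)/s^{\ell-1}=s^{\alpha-\ell}\to 0$ as $s\to 0^+$. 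For \eqref{h3}, I take $\ell^*>\max\{\alpha,2k\}$ as prescribed in the statement; assumption \eqref{eq:4.2} yields $2k\ge\frac{2N}{N-2}$, so the requirement $\ell^*>2k\ge\frac{2N}{N-2}$ is ensured, and $\tilde h(s)/s^{\ell^*-1}=s^{\alpha-\ell^*}\to 0$ as $s\to+\infty$.

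The remaining point is to justify that Theorem \ref{thm:4.1}, formulated for the normalized operator in \eqref{eq:4.1}, applies verbatim when each $\Delta_{2j}$ carries a fixed positive weight $a_j$. The associated energy functional simply becomes $\sum_{j=1}^{k}\frac{a_j}{2j}\|\nabla u\|_{2j}^{2j}-\int_{\RN}H(u)\,dx$, and every ingredient of the proof --- the intersection-type energy space and its embeddings, the decomposition of $h$, the mountain-pass geometry coupled with Jeanjean's monotonicity trick, the Pohozaev identity via \cite{DMS} for the convex Lagrangian $\sum_{j=1}^{k}\frac{a_j}{2j}|\xi|^{2j}$, and the Strauss compactness lemma --- goes through with only cosmetic modifications of constants. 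The main (and essentially only) point requiring attention is the simultaneous consistency of the choice of $\ell^*$: the strict gap condition $\alpha>\frac{2N}{N-2}$ combined with \eqref{eq:4.2} is precisely what allows $\ell^*>\max\{\alpha,2k\}$ to coexist with the growth controls at the origin and at infinity demanded by \eqref{h2}(i) and \eqref{h3}. Once that is noted, Theorem \ref{thm:4.1} yields both a positive radial solution and a radial ground state solution of \eqref{eq:4.3}.
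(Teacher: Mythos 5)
Your proposal is correct and takes essentially the same route as the paper: there, Corollary \ref{cor:4.2} is obtained exactly by choosing $\ell^*>\max\{\alpha,2k\}$, checking that the (truncated) pure power satisfies \eqref{h1}--\eqref{h4}, and invoking Theorem \ref{thm:4.1}, the inessential role of the coefficients $\frac{(2j-3)!!}{(j-1)!}\beta^{j-1}$ having been noted when \eqref{eq:4.1} was normalized. One cosmetic remark: since $h(s)=|s|^{\alpha-1}s$, the relevant quotient is $h(s)/s^{\ell^*-1}=s^{\alpha+1-\ell^*}$ (not $s^{\alpha-\ell^*}$), so to be safe one should take $\ell^*>\max\{\alpha+1,2k\}$; as $\ell^*$ may be chosen arbitrarily large this affects nothing.
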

We expect that under some smallness condition on $\beta$, a positive solution $u_k$ of \ef{eq:4.3}
converges to a positive solution of
\begin{equation} \label{eq:4.4}
\begin{cases}
\displaystyle -\Div \left( \frac{\nabla u}{\sqrt{1-2\beta|\nabla u|^2}} \right) =|u|^{\alpha-1}u
& \mbox{in} \ \RN,
\\[4mm]
u(x)\to 0 & \hbox{as }|x|\to +\infty,
\end{cases}
\end{equation}
as $k \to +\infty$ in a certain sense.
But we postpone this question to a future work.

We also note that the problem \ef{eq:4.3} has no non-trivial $C^1$ solution 
if $1<\alpha \le \frac{2N}{N-2}$.
Indeed for any non-trivial $C^1$ solution of \ef{eq:4.3}, 
by adapting the argument in \cite{DMS},
one can prove the following two identities hold:
\begin{align*}
\| \nabla u\|_2^2 + \beta \| \nabla u\|_4^4 \cdots 
+ \frac{(2k-3)!!}{(k-1)!}\beta^{k-1} \| \nabla u\|_{2k}^{2k}&
=\| u\|_{\alpha}^{\alpha} \quad (\hbox{Nehari}) \\
\frac{N-2}{2} \| \nabla u\|_2^2 + \frac{N-4}{4} \beta \| \nabla u\|_4^4
\cdots + \frac{N-2k}{2k} \frac{(2k-3)!!}{(k-1)!}\beta^{k-1} \| \nabla u\|_{2k}^{2k} &
=\frac{N}{\alpha} \| u\|_{\alpha}^{\alpha} 
\quad (\hbox{Pohozaev}).
\end{align*}
Substituting the first equation for the second one, we obtain
\begin{multline} \label{eq:4.5}
\left( \frac{N-2}{2}-\frac{N}{\alpha} \right) \| \nabla u\|_2^2
+\left( \frac{N-4}{4}-\frac{N}{\alpha} \right) \beta \| \nabla u\|_4^4 \\
+\cdots +\left( \frac{N-2k}{2k}-\frac{N}{\alpha} \right) 
\frac{(2k-3)!!}{(k-1)!}\beta^{k-1}\| \nabla u\|_{2k}^{2k}=0.
\end{multline}
If $1<\alpha \le \frac{2N}{N-2}$, it follows that $\frac{N}{\alpha} \ge \frac{N-2}{2}$ and hence
$$
\frac{N-2j}{2j}-\frac{N}{\alpha} \le \frac{N-2j}{2j}-\frac{N-2}{2} 
=-\frac{(j-1)N}{2j} \le 0 \quad \hbox{for} \ j \ge 1.$$
This implies that all terms in the left hand side of \ef{eq:4.5} are non-negative,
yielding that $\nabla u \equiv 0$ and hence $u \equiv 0$.
We note that the non-existence of positive radial solutions of \ef{eq:4.4}
for the case $1<\alpha  \le \frac{2N}{N-2}$ has been obtained in \cite{A2}
by the ODE technique.

\medskip
The proof of Theorem \ref{thm:4.1} is almost the same as those of Theorems \ref{thm:1.1}-\ref{thm:1.2},
\ref{thm:2.7} and \ref{thm:3.5}.
Here we consider the zero mass case (h2-i) and only give a sketch of the proof.

First we set a function space $\mathcal{H}^{2,2k}_{0}$ defined by
$\mathcal{H}^{2,2k}_{0}= 
\overline{ C_0^{\infty}(\RN)}^{\ \| \, \cdot \, \|_{\mathcal{H}^{2,2k}_{0}}}$, where
$$
\| u\|_{\mathcal{H}^{2,2k}_{0}} := \| \nabla u\|_2 + \| \nabla u\|_{2k}.$$
Since $2k \ge N$, it follows by Theorem \ref{th:embedding} that 
$\mathcal{H}^{2,2k}_{0} \hookrightarrow L^r(\RN)$ and
\begin{equation} \label{eq:4.6}
\| u\|_r \le C( \| \nabla u\|_2 + \| \nabla u\|_{2k})
\ \hbox{for any} \ u\in \mathcal{H}^{2,2k}_{0} \ \hbox{and} \ 
r\in \left[\frac{2N}{N-2},+\infty\right).
\end{equation}
We define the functional $I_k:\mathcal{H}^{2,2k}_{0} \to \mathbb{R}$ by
$$
I_k(u)=\frac{1}{2} \| \nabla u\|_2^2+\frac{1}{4} \| \nabla u\|_4^4
\cdots + \frac{1}{2k} \| \nabla u\|_{2k}^{2k} - \int_{\RN} H(u) \,dx,$$
which is well-defined and $C^1$ by \eqref{h1}-\eqref{h3}.
Moreover we truncate and decompose $h(s)$ as in Section \ref{se:vs}.
We apply the Monotonicity trick to $X=\mathcal{H}^{2,2k}_{0,{\rm rad}}$, where
\[
\mathcal{H}^{2,2k}_{0,{\rm rad}}=\{u\in \mathcal{H}^{2,2k}_0\mid u \hbox{ is radially symmetric }\},
\] and consider the modified functional $I_{k,\lambda}$
which is given by
$$
I_{k,\lambda}(u)=\frac{1}{2} \| \nabla u\|_2^2+\frac{1}{4} \| \nabla u\|_4^4
\cdots + \frac{1}{2k} \| \nabla u\|_{2k}^{2k} 
+ \int_{\RN} H_2(u) \,dx - \lambda \int_{\RN} H_1(u) \,dx$$
for $\lambda \in (0,1]$. 

The arguments from now on are similar to those of the previous sections and we omit the details.

\appendix
\section{}
In this appendix, we collect some well known lemmas which we used in this paper.

\begin{lemma}[Radial Lemma, \cite{BL,SWW}]
\label{lem:a.1}

Suppose $1<p<N$. Then there exists $C=C(N,p)>0$ such that
for any $u \in D^{1,p}_{\rm rad}(\RN)$,
$$
|u(x)| \le C |x|^{-\frac{N-p}{p}} \| \nabla u\|_p.$$
\end{lemma}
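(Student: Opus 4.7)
The plan is to reduce to a one-dimensional computation via the fundamental theorem of calculus and then apply Hölder's inequality in the radial coordinate. By a density argument it suffices to prove the estimate for $u\in C_0^{\infty}(\RN)$ radial, so I will write $u(x)=u(r)$ with $r=|x|$ and, using that $u$ has compact support, express
\[
u(r)=-\int_r^{+\infty} u'(s)\,ds.
\]

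Next I will introduce the weight $s^{(N-1)/p}$ artificially and apply Hölder's inequality with conjugate exponents $p$ and $p'=p/(p-1)$:
\[
|u(r)|\le \int_r^{+\infty} \bigl(|u'(s)|\,s^{(N-1)/p}\bigr)\bigl(s^{-(N-1)/p}\bigr)\,ds
\le \left(\int_r^{+\infty}|u'(s)|^p s^{N-1}\,ds\right)^{\!1/p}\!\left(\int_r^{+\infty} s^{-\frac{N-1}{p-1}}\,ds\right)^{\!1/p'}.
\]
The first factor is controlled by $C\|\nabla u\|_p$, since for radial functions $\|\nabla u\|_p^p = |S^{N-1}|\int_0^{+\infty}|u'(s)|^p s^{N-1}\,ds$. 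The second factor is where the hypothesis $p<N$ enters: the exponent $(N-1)/(p-1)$ is strictly larger than $1$, so the tail integral converges and equals $\frac{p-1}{N-p}\,r^{-(N-p)/(p-1)}$, whose $1/p'$-power is exactly $C\,r^{-(N-p)/p}$.

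Combining both estimates yields the pointwise bound $|u(x)|\le C|x|^{-(N-p)/p}\|\nabla u\|_p$ for all $x\neq 0$, with $C=C(N,p)$ explicit. Finally, I will extend the inequality to any $u\in D^{1,p}_{\rm rad}(\RN)$ by approximating with radial $C_0^{\infty}$ functions in the $\|\nabla\cdot\|_p$-norm and passing to the limit pointwise along a subsequence converging almost everywhere.

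There is no real obstacle here; the only delicate point is verifying that the improper integral $\int_r^{+\infty} s^{-(N-1)/(p-1)}\,ds$ is finite, which is exactly equivalent to the assumption $p<N$ and is the reason the exponent $(N-p)/p$ emerges as the sharp decay rate. The density step also requires that radial $C_0^{\infty}$ functions are dense in $D^{1,p}_{\rm rad}(\RN)$, which follows by standard mollification combined with symmetric-decreasing rearrangement or by truncation and radial regularization.
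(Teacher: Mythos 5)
Your proof is correct, and it is the standard argument: reduce to radial $C_0^\infty$ functions, write $u(r)=-\int_r^{+\infty}u'(s)\,ds$, insert the weight $s^{(N-1)/p}$, and apply H\"older; the exponent arithmetic checks out (the tail integral $\int_r^{+\infty}s^{-(N-1)/(p-1)}\,ds$ converges precisely because $p<N$ and produces the factor $r^{-(N-p)/(p-1)}$, whose $1/p'$ power is $r^{-(N-p)/p}$), and the first factor is bounded by $|S^{N-1}|^{-1/p}\|\nabla u\|_p$. For comparison: the paper offers no proof at all for this lemma, simply citing \cite{BL,SWW}, and your argument is essentially the one found in those references (the $p$-Laplacian analogue of the Strauss/Berestycki--Lions radial estimate). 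The only point I would tighten is the density step: symmetric-decreasing rearrangement is not the right tool there, since it replaces $u$ by a different function; radial mollification (convolution with a radial kernel, which preserves radial symmetry) together with truncation, plus a.e.\ convergence of a subsequence via the Sobolev embedding into $L^{p^*}(\RN)$, is the clean way to pass from $C_0^\infty$ to all of $D^{1,p}_{\rm rad}(\RN)$, exactly as you indicate in your alternative.
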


%

Next we recall a variant of the Strauss' compactness lemma due to \cite{AP}.
(See also \cite[Theorem A.1]{BL}, \cite{Str}.) 
It will be a fundamental tool in our arguments.
\begin{lemma}\label{lem:a.2}
Let $P$ and $Q:\R\to\R$ be two continuous functions satisfying
\begin{equation*}
\lim_{s\to\infty}\frac{P(s)}{Q(s)}=0,
\end{equation*}
$\{v_n\},$ $v$ and $z$ be measurable functions from $\RN$ to $\R$, with $z$ bounded,
such that
\begin{align*}
&\sup_{n \in \mathbb{N}} \int_\RN | Q(v_n(x))z|\,dx <+\infty,
\\ 
&P(v_n(x))\to v(x) \:\hbox{a.e. in }\RN. 
\end{align*}
Then $\|(P(v_n)-v)z\|_{L^1(B)}\to 0$, for any bounded Borel set
$B$.

Moreover, if we have also
\begin{align*}
\lim_{s\to 0}\frac{P(s)}{Q(s)} &=0,\\ 
\lim_{x\to\infty}\sup_{n \in \mathbb{N}} |v_n(x)| &= 0, 
\end{align*}
then $\|(P(v_n)-v)z\|_{L^1(\RN)}\to 0.$
\end{lemma}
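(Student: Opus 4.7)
The plan is to establish the two conclusions in turn, using a standard truncation/Egorov combination to pass from pointwise a.e.\ convergence of $P(v_n)$ to convergence in $L^1(|z|\,dx)$, first on bounded sets and then (under the additional hypotheses) globally. Throughout, set $C_0:=\|z\|_\infty<+\infty$ and $C_1:=\sup_n\int_\RN|Q(v_n)z|\,dx<+\infty$.

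For the first statement, fix a bounded Borel set $B$ and $\varepsilon>0$. Using $P(s)/Q(s)\to 0$ as $|s|\to\infty$, choose $M>0$ so that $|P(s)|\le\varepsilon|Q(s)|$ whenever $|s|\ge M$; set $K:=\sup_{|s|\le M}|P(s)|<+\infty$. Next, by Egorov's theorem on the finite-measure set $B$, for each $\eta>0$ there is $E_\eta\subset B$ with $|B\setminus E_\eta|<\eta$ on which $P(v_n)\to v$ uniformly. Then Fatou's lemma applied to $|P(v_n)z|$ shows $\int_B|v||z|\,dx<+\infty$, and on $B\setminus E_\eta$ one can estimate
\[
\int_{B\setminus E_\eta}|P(v_n)||z|\,dx
\le K C_0\,\eta+\varepsilon\int_{\{|v_n|>M\}}|Q(v_n)z|\,dx
\le K C_0\,\eta+\varepsilon C_1,
\]
together with a symmetric bound for $\int_{B\setminus E_\eta}|v||z|\,dx$ obtained from Fatou and absolute continuity of the integral. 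Combining with $\int_{E_\eta}|P(v_n)-v||z|\,dx\to 0$ (uniform convergence times a bounded, finite-measure weight), choosing $\eta$ then $\varepsilon$ small yields $\|(P(v_n)-v)z\|_{L^1(B)}\to 0$.

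For the second statement, assume additionally $P(s)/Q(s)\to 0$ as $s\to 0$ and $v_n\to 0$ uniformly at infinity. Given $\varepsilon>0$, pick $\delta>0$ such that $|P(s)|\le\varepsilon|Q(s)|$ for $|s|\le\delta$, and then pick $R>0$ so that $\sup_n\sup_{|x|\ge R}|v_n(x)|\le\delta$. On the exterior $\RN\setminus B_R$ we immediately get
\[
\int_{\RN\setminus B_R}|P(v_n)||z|\,dx\le\varepsilon\int_\RN|Q(v_n)z|\,dx\le\varepsilon C_1,
\]
and passing to the a.e.\ limit (together with Fatou) gives the same bound for $v$ in place of $P(v_n)$. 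On the ball $B_R$ the first part of the lemma applies directly, and summing the two contributions gives $\|(P(v_n)-v)z\|_{L^1(\RN)}\le\varepsilon(2C_1+1)$ for $n$ large; since $\varepsilon$ is arbitrary, the claim follows.

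The main subtlety is the control of $v$ itself on the bad sets $B\setminus E_\eta$ and $\RN\setminus B_R$: a priori we only have $P(v_n)\to v$ a.e., so the integrability of $v$ against $|z|$ has to be extracted from the same truncation estimates via Fatou's lemma rather than assumed. Once that is handled, the argument splits cleanly into a uniform-convergence part (on sets of finite measure where $|v_n|$ stays bounded away from the critical values $0$ and $\infty$) and two tail parts (in $s$-space near $\infty$ for the local statement, and additionally in $s$-space near $0$ and in $x$-space near $\infty$ for the global statement), both absorbed by the hypothesis $\sup_n\int|Q(v_n)z|\,dx<\infty$ through the comparison $|P|\le\varepsilon|Q|$.
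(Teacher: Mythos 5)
Your proof is correct. Note, however, that the paper itself offers no proof of Lemma \ref{lem:a.2}: it is quoted from the literature (the variant of Strauss's compactness lemma in \cite{AP}, cf.\ \cite[Theorem A.1]{BL}), so there is nothing to compare against in the text. Your argument --- Egorov on the finite-measure set $B$, the truncation $|P(s)|\le\varepsilon|Q(s)|$ for $|s|\ge M$ absorbed by $\sup_n\int|Q(v_n)z|\,dx$, Fatou to control $v$ on the exceptional sets, and, for the global statement, the extra truncation near $s=0$ combined with the uniform decay of $v_n$ at infinity --- is precisely the classical Berestycki--Lions proof, and all steps check out (the order of limits, $\eta\to0$ first and then $\varepsilon\to0$, is handled correctly even though $K$ depends on $\varepsilon$ through $M$).
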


\subsection*{Acknowledgment}
The authors would like to thank the anonymous referee for useful suggestions.
The first author is partially supported by  a grant of the group GNAMPA of INdAM. 
The second author is supported by JSPS Grant-in-Aid for Scientific Research (C) (No. 15K04970).


\begin{thebibliography}{99}


\bibitem{A1}
A.~Azzollini, 
{\it Ground state solution for a problem with mean curvature operator in Minkowski space}, 
J. Funct. Anal. {\bf 266} (2014), 2086--2095.

\bibitem{A2}
A.~Azzollini, 
{\it On a prescribed mean curvature equation in Lorentz-Minkowski space}, 
J. Math. Pures Appl. {\bf 106} (2016), 1122--1140.

\bibitem{ADP}
A.~Azzollini, P.~d'Avenia, A.~Pomponio, 
{\it Quasilinear elliptic equations in $\RN$ via variational methods 
and Orlicz-Sobolev embeddings}, 
Calc. Var. PDEs, {\bf 49}, (2014), 197--213.

\bibitem{AP07}
A.~Azzollini, A.~Pomponio, 
{\it On a "zero mass" nonlinear Schr\"odinger equation}, 
Adv. Nonlinear Stud. {\bf 7}, (2007), 599--627.

\bibitem{AP3}
A.~Azzollini, A.~Pomponio,
{\it Compactness results and applications to some ``zero mass" elliptic problems},
Nonlinear Anal. {\bf 69} (2008), 3559--3576.

\bibitem{AP2}
A.~Azzollini, A.~Pomponio, 
{\it Ground state solutions for the nonlinear Schr\"odinger-Maxwell equations}, 
J. Math. Anal. Appl. {\bf 345} (2008), 90--108.

\bibitem{AP}
A.~Azzollini, A.~Pomponio,
{\it On the Schr\"odinger equation in $R^N$ under the effect of a general nonlinear term}, 
Indiana Univ. Math. J. {\bf 58} (2009), 1361--1378.

\bibitem{BPR}
M.~Badiale, L.~Pisani, S.~Rolando, 
{\it Sum of weighted Lebesgue spaces and nonlinear elliptic equations}, 
NoDEA Nonlinear Diff. Equ. Appl. {\bf 18} (2011), 369--405.

\bibitem{BCS}
R.~Bartolo, A.~M.~Candela, A.~Salvatore,
{\it On a class of superlinear $(p,q)$-Laplacian type equations on $\RN$},
J. Math. Anal. Appl. {\bf 438} (2016), 29--41.

\bibitem{BM}
V.~Benci, A.~M.~Micheletti,
{\it Solutions in exterior domains of null mass nonlinear field equations},
Adv. Nonlinear Stud. {\bf 6} (2006), 171--198.

\bibitem{BL} 
H.~Berestycki, P.~L.~Lions, 
{\it Nonlinear scalar fields equations, I. Existence of a ground state},
Arch. Rational Mech. Anal. {\bf 82} (1983), 313--345.



\bibitem{BDP}
D.~Bonheure, P.~d'Avenia, A.~Pomponio,
{\it On the electrostatic Born-Infeld equation with extended charges},
Comm. Math. Phys. {\bf 346} (2016), 877--906.

\bibitem{BDD}
D.~Bonheure, C.~De Coster, A.~Derlet,
{\it Infinitely many radial solutions of a mean curvature equation in 
Lorentz-Minkowski space},
Rend. Istit. Mat. Univ. Trieste. {\bf 44} (2012), 259--284.

\bibitem{B}
H.~Brezis, 
{\it Functional Analysis, Sobolev Spaces and Partial Differential Equations},
Springer, New York, 2011.
%

\bibitem{CK}
H.~Carley, M.~K.~H.~Kiessling,
{\it Constructing graphs over $\RN$ with small prescribed mean-curvature}.
Math. Phys. Anal. Geom. {\bf 18:11} (2015), 25 pages.

\bibitem{CEM}
M.~F.~Chaves, G.~Ercole, O.~H.~Miyagaki,
{\it Existence of a nontrivial solution for the $(p,q)$-Laplacian
in $\RN$ without the Ambrosetti-Rabinowitz condition},
Nonlinear Anal. {\bf 114} (2015), 133--141.

\bibitem{DMS}
M.~Degiovanni, A.~Musesti, M.~Squassina,
{\it On the regularity of solutions in the Pucci-Serrin identity},
Calc. Var. {\bf 18} (2003), 317--334.

\bibitem{DM}
J.~M.~Do \'O, E.~Medeiros,
{\it Remarks on least energy solutions for quasilinear elliptic problems in $\RN$},
Elect. J. Diff. Eqns. {\bf 83} (2003), 1--14.

\bibitem{FG}
A.~Ferrero, F.~Gazzola,
{\it On subcriticality assumptions for the existence of ground states of
quasilinear elliptic equations},
Adv. Diff. Eqns. {\bf 8} (2003), 1081--1106.

\bibitem{F}
G.~M.~Figueiredo,
{\it Existence of positive solutions for a class of $p\& q$ elliptic problems with
critical growth on $\RN$.}
J. Math. Anal. Appl. {\bf 378} (2011), 507--518.

\bibitem{FOP}
D.~Fortunato, L.~Orsina, L.~Pisani,
{\it Born-Infeld type equations for electrostatic fields},
J. Math. Phys. {\bf 43} (2002), 5698--5706.

\bibitem{FLS}
B.~Franchi, E.~Lanconelli, J.~Serrin,
{\it Existence and uniqueness of nonnegative solutions 
of quasilinear equations in $\R^n$},
Adv. Math. {\bf 118} (1996), 177--243.

\bibitem{HL}
C.~He, G.~Li,
{\it The regularity of weak solutions to nonlinear scalar field elliptic equations 
containing $p\& q$-Laplacians},
Anal. Acad. Sci. Fenn. {\bf 33} (2008), 337--371.

\bibitem{HIT}
J.~Hirata, N.~Ikoma and K.~Tanaka,
{\it Nonlinear scalar field equations in $\RN$: 
mountain pass and symmetric mountain pass approaches},
Top. Methods in Nonlinear Anal. {\bf 35} (2010), 253--276.

\bibitem{J}
L.~Jeanjean. 
{\it On the existence of bounded Palais-Smale sequences 
and application to a Landesman-Lazer-type problem set on $\R$}, 
Proc. Royal Soc. Edin. {\bf 129A} (1999), 787--809.

\bibitem{M} 
M.~Mari{\c{s}},
{\it On the symmetry of minimizers},
Arch. Rat. Mech. Anal. {\bf 192} (2009), 311--330.

\bibitem{MP}
D.~Mugnai, N.~S.~Papageorgiou,
{\it Wang's multiplicity result for superlinear $(p,q)$-equations
without the Ambrosetti-Rabinowitz condition},
Trans. Amer. Math. Soc. {\bf 366} (2013), 4919--4937.

\bibitem{PS}
B.~Pellacci, M.~Squassina,
{\it Mountain pass solutions for quasi-linear equations via a monotonicity trick},
J. Math. Anal. Appl. {\bf 381} (2011), 857--865.

\bibitem{SS}
J.~Santos, S.~Soares,
{\it Radial solutions of quasilinear equations in Orlicz-Sobolev type spaces},
J. Math. Anal. Appl. {\bf 428} (2015), 1035--1053.

\bibitem{SZ}
J.~Serrin, H.~Zou,
{\it Symmetry of ground states of quasilinear elliptic equations},
Arch. Rat. Mech. Anal. {\bf 148} (1999), 265--290.

\bibitem{Str}
W.~A.~Strauss, 
{\it Existence of solitary waves in higher dimensions}, 
Comm. Math. Phys. {\bf 55} (1977), 149--162.

\bibitem{SWW}
J.~Su, Z.~Q.~Wang, M.~Willem,
{\it Weighted Sobolev embedding with unbounded and decaying radial potentials},
J. Diff. Eqns. {\bf 238} (2007), 201--219.

\bibitem{Ta}
G.~Talenti, 
{\it Best constant in Sobolev inequality}, 
Ann. Mat. Pura Appl. {\bf 110} (1976), 353--372.

\bibitem{T}
N.~S.~Trudinger,
{\it On Harnack type inequalities and their applications to quasilinear elliptic equations},
Comm. Pure Appl. Math. {\bf 20} (1967), 721--747.


\end{thebibliography}
\end{document}